\newcommand{\tun}{\begin{picture}(5,0)(-2,-1)
\put(0,0){\circle*{2}}
\end{picture}}
\newcommand{\tdeux}{\begin{picture}(7,7)(0,-1)
\put(3,0){\circle*{2}}
\put(3,0){\line(0,1){5}}
\put(3,5){\circle*{2}}
\end{picture}}
\newcommand{\ttroisun}{\begin{picture}(15,12)(-5,-1)
\put(3,0){\circle*{2}}
\put(-0.65,0){$\vee$}
\put(6,7){\circle*{2}}
\put(0,7){\circle*{2}}
\end{picture}}
\newcommand{\ttroisdeux}{\begin{picture}(5,15)(-2,-1)
\put(0,0){\circle*{2}}
\put(0,0){\line(0,1){5}}
\put(0,5){\circle*{2}}
\put(0,5){\line(0,1){5}}
\put(0,10){\circle*{2}}
\end{picture}}
\newcommand{\tquatreun}{\begin{picture}(15,12)(-5,-1)
\put(3,0){\circle*{2}}
\put(-0.65,0){$\vee$}
\put(6,7){\circle*{2}}
\put(0,7){\circle*{2}}
\put(3,7){\circle*{2}}
\put(3,0){\line(0,1){7}}
\end{picture}}
\newcommand{\tquatredeux}{\begin{picture}(15,18)(-5,-1)
\put(3,0){\circle*{2}}
\put(-0.65,0){$\vee$}
\put(6,7){\circle*{2}}
\put(0,7){\circle*{2}}
\put(0,14){\circle*{2}}
\put(0,7){\line(0,1){7}}
\end{picture}}
\newcommand{\tquatretrois}{\begin{picture}(15,18)(-5,-1)
\put(3,0){\circle*{2}}
\put(-0.65,0){$\vee$}
\put(6,7){\circle*{2}}
\put(0,7){\circle*{2}}
\put(6,14){\circle*{2}}
\put(6,7){\line(0,1){7}}
\end{picture}}
\newcommand{\tquatrequatre}{\begin{picture}(15,18)(-5,-1)
\put(3,5){\circle*{2}}
\put(-0.65,5){$\vee$}
\put(6,12){\circle*{2}}
\put(0,12){\circle*{2}}
\put(3,0){\circle*{2}}
\put(3,0){\line(0,1){5}}
\end{picture}}
\newcommand{\tquatrecinq}{\begin{picture}(9,19)(-2,-1)
\put(0,0){\circle*{2}}
\put(0,0){\line(0,1){5}}
\put(0,5){\circle*{2}}
\put(0,5){\line(0,1){5}}
\put(0,10){\circle*{2}}
\put(0,10){\line(0,1){5}}
\put(0,15){\circle*{2}}
\end{picture}}
\newcommand{\ptroisun}{\begin{picture}(15,12)(-5,-1)
\put(3,7){\circle*{2}}
\put(-0.65,0){$\wedge$}
\put(6,0){\circle*{2}}
\put(0,0){\circle*{2}}
\end{picture}}
\newcommand{\pquatreun}{\begin{picture}(15,12)(-5,-1)
\put(3,7){\circle*{2}}
\put(-0.65,0){$\wedge$}
\put(6,0){\circle*{2}}
\put(0,0){\circle*{2}}
\put(3,0){\circle*{2}}
\put(2.9,0){\line(0,1){7}}
\end{picture}}
\newcommand{\pquatredeux}{\begin{picture}(15,18)(-5,-1)
\put(3,14){\circle*{2}}
\put(-0.65,7){$\wedge$}
\put(6,7){\circle*{2}}
\put(0,7){\circle*{2}}
\put(0,0){\circle*{2}}
\put(0,0){\line(0,1){7}}
\end{picture}}
\newcommand{\pquatretrois}{\begin{picture}(15,18)(-5,-1)
\put(3,14){\circle*{2}}
\put(-0.65,7){$\wedge$}
\put(6,7){\circle*{2}}
\put(0,7){\circle*{2}}
\put(6,0){\circle*{2}}
\put(6,0){\line(0,1){7}}
\end{picture}}
\newcommand{\pquatrequatre}{\begin{picture}(15,18)(-5,-1)
\put(3,7){\circle*{2}}
\put(-0.65,0){$\wedge$}
\put(6,0){\circle*{2}}
\put(0,0){\circle*{2}}
\put(3,12){\circle*{2}}
\put(3,7){\line(0,1){5}}
\end{picture}}
\newcommand{\pquatrecinq}{\begin{picture}(15,12)(-5,-1)
\put(0,0){\circle*{2}}
\put(7,0){\circle*{2}}
\put(0,7){\circle*{2}}
\put(7,7){\circle*{2}}
\put(0,0){\line(0,1){7}}
\put(7,0){\line(0,1){7}}
\put(.5,1.5){$\scriptstyle \diagup$}
\end{picture}}
\newcommand{\pquatresix}{\begin{picture}(15,12)(-5,-1)
\put(0,0){\circle*{2}}
\put(7,0){\circle*{2}}
\put(0,7){\circle*{2}}
\put(7,7){\circle*{2}}
\put(0,0){\line(0,1){7}}
\put(7,0){\line(0,1){7}}
\put(0,1.5){$\scriptstyle \diagdown$}
\end{picture}}
\newcommand{\pquatresept}{\begin{picture}(15,12)(-5,-1)
\put(0,0){\circle*{2}}
\put(7,0){\circle*{2}}
\put(0,7){\circle*{2}}
\put(7,7){\circle*{2}}
\put(0,0){\line(0,1){7}}
\put(7,0){\line(0,1){7}}
\put(.5,1.5){$\scriptstyle \diagup$}
\put(0,1.5){$\scriptstyle \diagdown$}
\end{picture}}
\newcommand{\pquatrehuit}{\begin{picture}(15,18)(-5,-1)
\put(3,0){\circle*{2}}
\put(-0.65,0){$\vee$}
\put(6,7){\circle*{2}}
\put(0,7){\circle*{2}}
\put(3,14){\circle*{2}}
\put(-0.65,7){$\wedge$}
\end{picture}}
\newcommand{\PP}{\mathcal{PP}}
\newcommand{\PF}{\mathcal{PF}}
\newcommand{\h}{{\cal H}}
\renewcommand{\S}{\mathfrak{S}}
\newcommand{\prodg}{\rightsquigarrow}
\newcommand{\prodh}{\lightning}
\newcommand{\tdelta}{\tilde{\Delta}}
\title{Bruhat order on plane posets and applications}
\date{}
\author{Lo{\"\i}c Foissy \\
\\
{\small{\it Laboratoire de Mathématiques, Université de Reims}}\\
\small{{\it Moulin de la Housse - BP 1039 - 51687 REIMS Cedex 2, France}}\\
\small{e-mail : loic.foissy@univ-reims.fr}}
\newtheorem{defi}{\indent Definition}
\newtheorem{lemma}[defi]{\indent Lemma}
\newtheorem{cor}[defi]{\indent Corollary}
\newtheorem{theo}[defi]{\indent Theorem}
\newtheorem{prop}[defi]{\indent Proposition}
\newenvironment{proof}{{\bf Proof.}}{\hfill $\Box$}
\begin{document}

\maketitle

ABSTRACT. A plane poset is a finite set with two partial orders, satisfying a certain incompatibility condition. 
The set $\PP$ of isoclasses of plane posets owns two products, and an infinitesimal Hopf algebra structure is defined on the vector space
$\h_{\PP}$ generated by $\PP$, using the notion of biideals of plane posets.

We here define a partial order on $\PP$, making it isomorphic to the set of partitions  with the weak Bruhat order. 
We prove that this order is compatible with both products of $\PP$; 
moreover, it encodes a non degenerate Hopf pairing on the infinitesimal Hopf algebra $\h_{\PP}$. \\

Keywords. Plane posets; weak Bruhat order; infinitesimal Hopf algebras.\\

AMS classification. 06A11, 16W30, 06A07.

\tableofcontents

\section*{Introduction}

A \emph{double poset} is a finite set with two partial orders. The set of (isoclasses of) double posets owns several algebraic structures, as:
\begin{itemize}
\item  a product called \emph{composition}; it corresponds, roughly speaking, to the concatenation of Hasse graphs.
\item a coproduct, defined with the notion of \emph{ideals} for the first partial order. One obtains in this way the Malvenuto-Reutenauer 
Hopf algebra of double posets \cite{Reutenauer}.
\item a pairing defined with the helph of  Zelevinsky \emph{pictures}. It is shown in \cite{Reutenauer} that this pairing is Hopf;
consequently, the Hopf algebra of double posets is free, cofree and self-dual.
\end{itemize}
This Hopf algebra also contains many interesting subobjects, as, for example, the Hopf algebra of \emph{special posets},
that is to say double posets such that the second partial order is total, the Hopf algebra of plane posets \cite{Foissy2,Foissy4}, 
that is to say double posets such that the two partial orders satisfy an incompatibility condition (see definition \ref{1} below), 
or the noncommutative Hopf algebra of plane trees \cite{Foissy5,Foissy6,Holtkamp}, also known as the noncommutative Connes-Kreimer Hopf algebra. 
In particular, the Hopf subalgebra of plane poset turns out to be isomorphic to the Hopf algebra of permutations introduced by 
Malvenuto and Reutenauer in \cite{Reutenauer2}, also known ad the Hopf algebra of free quasi-symmetric functions \cite{Duchamp,Duchamp2}. 
An explicit isomorphism can be defined with the help of a bijection $\Psi_n$ between the set of plane posets on $n$ vertices and the symmetric group on $n$ letters, 
recalled here in theorem \ref{3}. This isomorphism and its applications are studied in \cite{Foissy2}. \\

We proceed here with the algebraic study of the links between permutations and plane posets. As the symmetric group $\S_n$ is partially ordered
by the weak Bruhat order, via the bijection $\Psi_n$ the set of plane posets is also partially ordered. This order has a nice combinatorial description,
see definition \ref{8}. It admits a decreasing bijection $\iota$, given by the exchange of the two partial orders defining plane posets; 
on the permutation side, this bijection consists of reversing the words representing the permutations. For example, let us give the Hasse graph
of this partial order restricted to plane posets of degree $3$, and the Hasse graph of the weak Bruhat order on $\S_3$:
$$\xymatrix{&\tun\tun\tun\ar@{-}[dl]\ar@{-}[dr]&\\
\tdeux\tun\ar@{-}[d]&&\tun\tdeux\ar@{-}[d]\\
\ttroisun\ar@{-}[dr]&&\ptroisun\ar@{-}[dl]\\
&\ttroisdeux&}\hspace{1cm}\xymatrix{&(321)\ar@{-}[dl]\ar@{-}[dr]&\\
(312)\ar@{-}[d]&&(231)\ar@{-}[d]\\
(132)\ar@{-}[dr]&&(213)\ar@{-}[dl]\\
&(123)&}$$
This partial order is related to an infinitesimal Hopf algebra structure on plane posets. Recall that an infinitesimal Hopf algebra $\h$ \cite{Loday,Loday2}
is both an algebra and a coalgebra, satisfying the following compatibility: if $x,y\in \h$,
$$\Delta(xy)=\Delta(x)(1\otimes y)+(x\otimes 1)\Delta(y)-x\otimes y.$$
For a certain coproduct $\Delta_1$, given by biideals, the space of plane posets $\h_{\PP}$ becomes an infinitesimal Hopf algebra
for two products, the composition $m$ and the transformation $\prodh$ of $m$ by $\iota$. This coproduct is a special case of the four-parameters deformation 
of \cite{Foissy1}. This structure is also self-dual, with an explicit Hopf pairing $\langle-,-\rangle_1$ (theorem \ref{23}). This pairing is related 
to the partial Bruhat order in the following way: if $P,Q$ are two plane posets,
$$\langle P,Q\rangle_1=\left\{\begin{array}{l}
1 \mbox{ if }\iota(P)\leq Q,\\
0\mbox{ if not}.
\end{array}\right.$$
All these results admit a one parameter deformation, which is given in this text. \\

The text is organised as follows: the first section deals with double and plane posets: after some recalls, we give the definition of the infinitesimal coproduct and
its one-parameter deformation. The partial order on plane posets is defined in the second section; the isomorphism with the weak Bruhat order is also proved.
In the last section, the infinitesimal Hopf algebra structure and the partial order are related via the definition of a Hopf pairing.\\

{\bf Notations.} $K$ is a commutative field. All the vector spaces, algebras, coalgebras,$\ldots$ of this text are taken over $K$.

\section{Double and plane posets}

\subsection{Reminders}

\begin{defi}\label{1} \begin{enumerate}
\item \cite{Reutenauer} A \emph{ double poset} is a finite set $P$ with two partial orders $\leq_h$ and $\leq_r$.
\item A \emph{ plane poset} is a double poset $P$ such that for all $x, y\in P$, such that $x\neq y$,
$x$ and $y$ are comparable for $\leq_h$ if, and only if, $x$ and $y$ are not comparable for $\leq_r$.
The set of isoclasses of plane posets will be denoted by $\PP$. 
For all $n \in \mathbb{N}$, the set of isoclasses of plane posets of cardinality $n$ will be denoted by $\PP(n)$.
\item Let $P,Q \in \PP$. We shall say that $P$ is a \emph{plane subposet} of $Q$ if $P \subseteq Q$ and if the two partial orders of $P$ are the restriction
of the two partial orders of $Q$ to $P$.
\end{enumerate}\end{defi}

{\bf Examples}. Here are the plane posets of cardinal $\leq 4$. They are given by the Hasse graph of $\leq_h$; 
if $x$ and $y$ are two vertices of this graph which are not comparable for $\leq_h$, then $x \leq_r y$ if $y$ is more on the right than $x$.
\begin{eqnarray*}
\PP(0)&=&\{\emptyset\},\\
\PP(1)&=&\{\tun\},\\
\PP(2)&=&\{\tun\tun,\tdeux\},\\
\PP(3)&=&\{\tun\tun\tun,\tun\tdeux,\tdeux\tun,\ttroisun,\ttroisdeux,\ptroisun\},\\
\PP(4)&=&\left\{\begin{array}{c}
\tun\tun\tun\tun,\tun\tun\tdeux,\tun\tdeux\tun,\tdeux\tun\tun,\tun\ttroisun,\ttroisun\tun,\tun\ttroisdeux,\ttroisdeux\tun,\tun\ptroisun,\ptroisun\tun,\tdeux\tdeux,\\
\tquatreun,\tquatredeux,\tquatretrois,\tquatrequatre,\tquatrecinq,\pquatreun,\pquatredeux,\pquatretrois,\pquatrequatre,\pquatrecinq,\pquatresix,\pquatresept,\pquatrehuit
\end{array}\right\}.\end{eqnarray*}

The following proposition is proved in \cite{Foissy4} (proposition 11):

\begin{prop}
Let $P\in\PP$. We define a relation $\leq$ on $P$ by:
$$(x\leq y) \mbox{ if } (x\leq_h y\mbox{ or } x \leq_r y).$$
Then $\leq$ is a total order on $P$.
\end{prop}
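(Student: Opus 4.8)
The plan is to verify the four defining properties of a total order---reflexivity, antisymmetry, transitivity and totality---for the relation $\leq$, using throughout the defining incompatibility condition of a plane poset: for distinct $x,y\in P$, \emph{exactly one} of ``$x,y$ comparable for $\leq_h$'' and ``$x,y$ comparable for $\leq_r$'' holds. Reflexivity is immediate, since $\leq_h$ is reflexive and hence $x\leq_h x$ gives $x\leq x$. Totality is nearly as quick: given distinct $x,y$, the incompatibility condition forces them to be comparable for at least one of $\leq_h$, $\leq_r$, and in either case we obtain $x\leq y$ or $y\leq x$.

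For antisymmetry, suppose $x\leq y$ and $y\leq x$ with $x\neq y$. Each hypothesis splits into an $h$-alternative and an $r$-alternative, giving four combinations. The two pure combinations ($\leq_h$ in both, or $\leq_r$ in both) contradict the antisymmetry of $\leq_h$, respectively of $\leq_r$. The two mixed combinations, say $x\leq_h y$ together with $y\leq_r x$, are impossible because the first makes $x,y$ comparable for $\leq_h$ and hence incomparable for $\leq_r$, directly contradicting the second.

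The substance of the argument, and the step I expect to be the main obstacle, is transitivity. Assume $x\leq y$ and $y\leq z$; we may assume $x,y,z$ pairwise distinct, the remaining cases being trivial. There are again four cases. When both relations come from the same order, transitivity of that order yields the conclusion at once. The delicate cases are the mixed ones, for instance $x<_h y$ and $y<_r z$. Here I would argue by contradiction about the comparability of $x$ and $z$: the incompatibility condition guarantees that $x$ and $z$ are comparable for exactly one of the two orders, so it suffices to rule out the reversed alternatives $z<_h x$ and $z<_r x$. If $z<_h x$, then $z<_h x<_h y$ forces $z<_h y$ by transitivity of $\leq_h$, contradicting the fact that $y<_r z$ makes $y,z$ incomparable for $\leq_h$. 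If instead $z<_r x$, then $y<_r z<_r x$ forces $y<_r x$ by transitivity of $\leq_r$, contradicting the fact that $x<_h y$ makes $x,y$ incomparable for $\leq_r$. Hence one of $x<_h z$, $x<_r z$ must hold, giving $x\leq z$.

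The symmetric mixed case $x<_r y$ and $y<_h z$ is treated by the same scheme with the roles of $\leq_h$ and $\leq_r$ interchanged. The recurring mechanism in each subcase is that a comparison in the wrong direction, combined with transitivity of a single order, produces a comparison that the incompatibility condition forbids; this is precisely where the plane-poset hypothesis does the work that ordinary partial orders cannot. Having established all four properties, we conclude that $\leq$ is a total order on $P$.
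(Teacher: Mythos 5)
Your proof is correct and complete: reflexivity, antisymmetry and totality follow quickly, and your handling of the two mixed transitivity cases---ruling out the reversed comparabilities $z<_h x$ and $z<_r x$ by combining transitivity of a single order with the incompatibility condition---is exactly the standard argument. Note that this paper does not actually prove the proposition but cites \cite{Foissy4} (proposition 11) for it, where essentially this same direct case analysis is carried out, so your approach matches the source.
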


As a consequence, for any plane poset $P\in \PP(n)$, we shall assume that $P=\{1,\ldots,n\}$ as a totally ordered set. \\

The following theorem is proved in \cite{Foissy2} (up to a passage to the inverse):

\begin{theo}\label{3}\begin{enumerate}
\item Let $\sigma\in \S_n$. We define a plane poset $P_\sigma$ in the following way:
\begin{itemize}
\item $P_\sigma=\{1,\ldots,n\}$ as a set.
\item If $i,j \in P_\sigma$, $i\leq_h j$ if $i\leq j$ and $\sigma^{-1}(i) \leq \sigma^{-1}(j)$.
\item If $i,j \in P_\sigma$, $i\leq_r j$ if $i\leq j$ and $\sigma^{-1}(i) \geq \sigma^{-1}(j)$.
\end{itemize}
The total order on $\{1,\ldots,n\}$ induced by this plane poset structure is the usual one.
\item For all $n \geq 0$, the following map is a bijection:
$$\Psi_n: \left\{\begin{array}{rcl}
\S_n&\longrightarrow&\PP(n)\\
\sigma&\longrightarrow&P_\sigma.
\end{array}\right.$$
\end{enumerate}\end{theo}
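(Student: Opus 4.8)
The plan is to prove that $\Psi_n$ is a bijection by exhibiting an explicit two-sided inverse, constructing from each plane poset $P\in\PP(n)$ a permutation $\sigma_P\in\S_n$ and checking that the two constructions undo one another. First I would verify that $P_\sigma$ as defined really is a plane poset: I must check that $\leq_h$ and $\leq_r$ are genuine partial orders (reflexivity is clear; transitivity and antisymmetry follow from the fact that $\leq$ on $\mathbb{N}$ and $\sigma^{-1}$ of $\leq$ are both total orders, so each relation is an intersection of two total orders restricted appropriately), and then verify the incompatibility condition of Definition \ref{1}. The key observation is that for distinct $i,j$ with, say, $i<j$, exactly one of $\sigma^{-1}(i)\leq\sigma^{-1}(j)$ or $\sigma^{-1}(i)>\sigma^{-1}(j)$ holds; the first case gives $i\leq_h j$ and the second gives $i\leq_r j$, and these are mutually exclusive. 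This simultaneously shows comparability for exactly one of the two orders, which is precisely the plane poset axiom, and it also shows that the induced total order $\leq$ recovers the usual order on $\{1,\ldots,n\}$, settling the last sentence of part (1).

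For part (2), the natural inverse sends a plane poset $P$ (with underlying set $\{1,\ldots,n\}$ via the induced total order of the preceding proposition) to the permutation $\sigma_P$ determined by reading off a linear extension of $\leq_h$. Concretely, since $\leq_h$ and $\leq_r$ partition the pairs of distinct comparable elements, I would define $\sigma_P^{-1}$ so that its values respect $\leq_h$ as an increasing relation and reverse $\leq_r$; equivalently, $\sigma_P$ encodes the unique linear order in which the $\leq_h$-comparabilities become ascents and the $\leq_r$-comparabilities become descents relative to the natural order. The heart of the argument is to show this assignment is well defined, i.e.\ that such a permutation exists and is unique: existence amounts to checking that the combined data of $\leq_h$ (as $\sigma^{-1}$-increasing) and $\leq_r$ (as $\sigma^{-1}$-decreasing) are consistent with a single linear order on the indices, and uniqueness follows because on each pair the relative order of $\sigma^{-1}$-values is forced.

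I would then close the loop with the two verifications $\Psi_n(\sigma_{P})=P$ and $\sigma_{P_\sigma}=\sigma$. For the first, starting from $P$ I reconstruct $\leq_h$ and $\leq_r$ from $\sigma_P$ using the defining formulas of part (1) and check, pair by pair, that I recover the original orders; this uses that the induced total order $\leq$ on $P$ agrees with the standard order on $\{1,\ldots,n\}$, so the condition ``$i\leq j$'' in the definition matches the labeling. For the second, I start from $\sigma$, build $P_\sigma$, and verify that reading off its orders returns $\sigma$; here the incompatibility condition guarantees that every pair $(i,j)$ contributes unambiguously to exactly one of the two relations, so no information is lost.

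The main obstacle I anticipate is the well-definedness and consistency of the inverse map: one must show that the $\leq_h$ and $\leq_r$ data of an arbitrary plane poset can always be simultaneously realized by a single permutation $\sigma^{-1}$, with no contradictory constraints on the relative sizes of $\sigma^{-1}(i)$ and $\sigma^{-1}(j)$ across different pairs. This is where the incompatibility axiom does the real work, and it is plausible that transitivity of $\leq_h$ and $\leq_r$ must be invoked carefully to rule out cyclic constraints; establishing this is the crux, while the remaining bijectivity checks are then essentially bookkeeping. Since the statement says the result is proved in \cite{Foissy2} up to passing to the inverse, I would also note that the inversion $\sigma\mapsto\sigma^{-1}$ accounts for the swap between the $\leq_h$/$\leq_r$ conventions here and those of the cited reference.
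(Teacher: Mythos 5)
Note first that the paper itself contains no proof of this theorem: it is quoted from \cite{Foissy2} (``up to a passage to the inverse''), so your attempt can only be compared with the standard argument, which is indeed the one you outline: verify that $P_\sigma$ is a plane poset whose induced total order is the natural order on $\{1,\ldots,n\}$, then invert the construction by reading a permutation off the pair of partial orders. Your part (1) is complete as written: for distinct $i<j$ exactly one of $\sigma^{-1}(i)<\sigma^{-1}(j)$ or $\sigma^{-1}(i)>\sigma^{-1}(j)$ holds, which yields simultaneously the incompatibility axiom and the identification of the induced total order, and transitivity and antisymmetry of $\leq_h$ and $\leq_r$ follow since each is the intersection of two total orders.

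The one place where your text stops short of a proof is exactly the step you flag yourself: you assert that the $\leq_h$/$\leq_r$ data of an arbitrary plane poset are ``consistent with a single linear order'' and call its establishment the crux, but you do not establish it; as written this is a genuine gap, though one that closes in a few lines. Label $P\in\PP(n)$ by its induced total order and define $i\prec j$ if and only if $i<_h j$ or $j<_r i$. Totality and antisymmetry of $\prec$ on distinct pairs follow from the totality of $\leq$ together with the incompatibility axiom. For transitivity, assume $i\prec j\prec k$ with $i\neq k$: the cases $i<_h j<_h k$ and $k<_r j<_r i$ follow from transitivity of each order; in the mixed case $i<_h j$ and $k<_r j$, if $k<_h i$ then $k<_h j$, contradicting $k<_r j$, and if $i<_r k$ then $i<_r j$, contradicting $i<_h j$; the remaining case $j<_r i$, $j<_h k$ is symmetric. (The possibility $i=k$ is excluded in the mixed cases, since it would force $i<_h j$ and $i<_r j$ simultaneously.) Hence $\prec$ is a strict total order, the unique $\sigma\in\S_n$ with $\sigma^{-1}(i)<\sigma^{-1}(j)\Leftrightarrow i\prec j$ satisfies $P_\sigma=P$, and since the relative order of $\sigma^{-1}(i),\sigma^{-1}(j)$ is forced on every pair, both composites are the identity, as you say. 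With this case analysis inserted your proof is correct, and your closing remark that the passage $\sigma\mapsto\sigma^{-1}$ reconciles the conventions here with those of \cite{Foissy2} is accurate.
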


{\bf Examples.}
$$\begin{array}{rclccrclccrcl}
\Psi((1))&=&\tun,&&\Psi_2((12))&=&\tdeux,&&\Psi_2((21))&=&\tun\tun,\\
\Psi_3(123))&=&\ttroisdeux,&&\Psi_3((132))&=&\ttroisun,&&\Psi_3(213))&=&\ptroisun,\\
\Psi_3((231))&=&\tun\tdeux,&&\Psi_3(312))&=&\tdeux\tun,&&\Psi_3((321))&=&\tun\tun\tun,\\
\Psi_4(1234))&=&\tquatrecinq,&&\Psi_4((1243))&=&\tquatrequatre,&&\Psi_4((1342))&=&\tquatredeux,\\
\Psi_4((1324))&=&\pquatrehuit,&&\Psi_4((1423))&=&\tquatretrois,&&\Psi_4((1432))&=&\tquatreun,\\
\Psi_4((2134))&=&\pquatrequatre,&&\Psi_4((2143))&=&\pquatresept,&&\Psi_4((2314))&=&\pquatredeux,\\
\Psi_4((2341))&=&\ttroisdeux\tun,&&\Psi_4((2413))&=&\pquatrecinq,&&\Psi_4((2431))&=&\ttroisun\tun,\\
\Psi_4((3124))&=&\pquatretrois,&&\Psi_4((3142))&=&\pquatresix,&&\Psi_4((3214))&=&\pquatreun,\\
\Psi_4((3241))&=&\ptroisun\tun,&&\Psi_4((3412))&=&\tdeux\tdeux,&&\Psi_4((3421))&=&\tdeux\tun\tun,\\
\Psi_4((4123))&=&\tun\ttroisdeux,&&\Psi_4((4132))&=&\tun\ttroisun,&&\Psi_4((4213))&=&\tun\ptroisun,\\*
\Psi_4((4231))&=&\tun\tdeux\tun,&&\Psi_4((4312))&=&\tun\tun\tdeux,&&\Psi_4((4321))&=&\tun\tun\tun\tun.
\end{array}$$

We shall use three particular families of plane posets:

\begin{defi}
Let $P \in \PP$.
\begin{enumerate}
\item We shall say that $P$ is a \emph{plane forest} if it does not contain $\ptroisun$ as a plane subposet.
The set of plane forests is denoted by $\PF$.
\item We shall say that $P$ is a plane forest if it does not contain $\ptroisun$ as a plane subposet.
The set of plane forests will be denoted by $\PF$, and the set of plane forests of cardinality $n$ will be denoted by $\PF(n)$. \\
\end{enumerate}\end{defi}

{\bf Remark.} In other words, a plane poset is a plane forest if, and only if, its Hasse graph is a rooted forest.

\subsection{Algebraic structures on plane posets}

We define two products on $\PP$. The first is called \emph{composition} in \cite{Reutenauer} and is denoted by $\prodg$ in \cite{Foissy4}.
We shall shortly denote it by $m$ in this text.

\begin{defi}
Let $P,Q\in \PP$. 
\begin{enumerate}
\item The double poset $PQ$ is defined as follows:
\begin{itemize}
\item $PQ=P\sqcup Q$ as a set, and $P,Q$ are plane subposets of $PQ$.
\item For all $x \in P$, for all $y\in Q$, $x \leq_r y$.
\end{itemize}
\item The double poset $P\prodh Q$ is defined as follows:
\begin{itemize}
\item $P\prodh Q=P\sqcup Q$ as a set, and $P,Q$ are plane subposets of $PQ$.
\item For all $x \in P$, for all $y\in Q$, $x \leq_h y$.
\end{itemize}

\end{enumerate}\end{defi}

{\bf Examples.}\begin{enumerate}
\item The Hasse graph of $PQ$ is  the concatenation of the Hasse graphs of $P$ and $Q$. 
\item Here are examples for $\prodh$: $\tun \prodh \tdeux=\ttroisdeux,\: \tdeux \prodh \tun=\ttroisdeux,\: \tun \prodh \tun \tun=\ttroisun,\: \tun \tun \prodh \tun=\ptroisun$.
\end{enumerate}

The vector space generated by $\PP$ is denoted by $\h_{\PP}$. These two products are linearly extended to $\h_{\PP}$;
then $(\h_{\PP},.)$ and $(\h_{\PP},\prodh)$ are two associative, unitary algebras, sharing the same unit $1$, which is the empty plane poset.
Moreover, they are both graded by the cardinality of plane posets.

\subsection{Infinitesimal coproducts}

\begin{defi}
\cite{Reutenauer}. Let $P=(P,\leq_h,\leq_r)$ be a plane poset, and let $I \subseteq P$. 
\begin{enumerate}
\item We shall say that $I$ is a {\it $h$-ideal} of $P$, if, for all $x,y \in P$: $$(x\in I, \:x\leq_h y)\Longrightarrow (y\in I).$$
\item We shall say that $I$ is a {\it $r$-ideal} of $P$, if, for all $x,y \in P$: $$(x\in I, \:x\leq_r y)\Longrightarrow (y\in I).$$
\item We shall say that $I$ is a {\it biideal} of $P$ if it both a $h$-ideal and a $r$-ideal. 
\end{enumerate}\end{defi}

{\bf Remark.} If $P$ is a plane poset and $I\subseteq P$, $I$ is a biideal of $P$  if, for all $x,y \in P$:
$$(x\in I, \:x\leq y)\Longrightarrow (y\in I).$$

\begin{theo}
Let $q \in K$. We define a coproduct on $\h_{\PP}$ in the following way: for all $P\in \PP$,
$$\Delta_q(P)=\sum_{\mbox{\scriptsize $I$ biideal of $P$}} q^{h_{P\setminus I}^I}(P\setminus I) \otimes I,$$
where, for all $I,J \subseteq P$, $h_I^J=\{(x,y) \in I\times J\mid x <_h y\}$.
Then $\Delta_q$ is coassociative and for all $x,y\in \h_{\PP}$, using Sweedler notations:
\begin{eqnarray*}
\Delta_q(xy)&=&\sum x^{(1)}\otimes x^{(2)}y+\sum xy^{(1)}\otimes y^{(2)}-x \otimes y,\\
\Delta_q(x\prodh y)&=&\sum q^{|x^{(1)}||y|} x^{(1)} \otimes x^{(2)}\prodh y+\sum q^{|x||y^{(2)}|}x\prodh y^{(2)}\otimes y^{(1)}
-q^{|x||y|} x\otimes y.
\end{eqnarray*}
Hence, $(\h_{\PP},m,\Delta_q)$ is an infinitesimal Hopf algebra, as well as $(\h_{\PP},\prodh,\Delta_1)$.
\end{theo}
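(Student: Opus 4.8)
The plan is to establish coassociativity first, then verify the two compatibility relations separately, treating the multiplicativity of $\Delta_q$ over $m$ and over $\prodh$ as the two substantive computations. Throughout, the key combinatorial fact I would exploit is that biideals of $P$ are exactly the up-sets for the total order $\leq$ (by the Remark following the definition of biideals), so a biideal $I$ is determined by a threshold, and more importantly the biideals of a product $PQ$ or $P\prodh Q$ decompose cleanly in terms of biideals of the factors. This is what will let me match coproduct terms with the right-hand sides of the two Sweedler identities.

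First I would prove coassociativity. Applying $\Delta_q$ twice produces a sum over pairs $I\subseteq J$ of subsets of $P$ where $I$ is a biideal of $P$ and $J\setminus I$ is a biideal of $P\setminus I$; the symmetric computation $(\mathrm{id}\otimes\Delta_q)\Delta_q$ gives pairs where $J$ is a biideal of $P$ and $I$ is a biideal of $J$. The crucial lemma is that these two indexing sets coincide: a chain $I\subseteq J\subseteq P$ contributes to both sides if and only if $I$ is a biideal of $P$ and $J$ is a biideal of $P$ with $I\subseteq J$ (using transitivity of $\leq$, being an up-set of an up-set is the same as being an up-set). For the $q$-weights I must check that $h^I_{J\setminus I}+h^{J}_{P\setminus J}$ assembles to the same total exponent as $h^I_{P\setminus I}+h^{J\setminus I}_{(P\setminus I)\setminus(J\setminus I)}$; this is immediate since both equal $h^I_{P\setminus I}+h^{J\setminus I}_{P\setminus J}$, the three-block count of $<_h$-pairs going from lower to higher blocks in the ordered partition $(P\setminus J, J\setminus I, I)$.

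Next I would verify the compatibility for $m$. Writing $R=PQ$, I would show that every biideal $I$ of $R$ is either a biideal $I'$ of $Q$ with $I=I'$ (the generic case, giving $P\otimes Q$ split as $\sum xy^{(1)}\otimes y^{(2)}$), or contains all of $Q$ and equals $Q\sqcup I''$ for a biideal $I''$ of $P$ (giving $\sum x^{(1)}\otimes x^{(2)}y$); these two families overlap exactly in the term $I=Q$, which accounts for the correction $-x\otimes y$. Here the relation $x\leq_r y$ for all $x\in P,y\in Q$ forces any biideal meeting $P$ to contain $Q$ entirely, which is what makes the decomposition dichotomous. Since in the product $m$ all cross-pairs are $\leq_r$ and never $<_h$, the weight $h$ never picks up contributions across the $P$–$Q$ boundary, so the exponents reduce to those of the factors and no extra power of $q$ appears, matching the stated ($q$-free) identity for $m$.

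The compatibility for $\prodh$ is the analogous but more delicate computation, and I expect the bookkeeping of the $q$-exponents to be the main obstacle. With $R=P\prodh Q$ every cross-pair now satisfies $x<_h y$, so a biideal $I$ of $R$ again splits into the two families (a biideal of $Q$, or $Q$ together with a biideal of $P$), but now the counting function $h$ receives a contribution $|P\setminus I|\cdot|Q|$ or similar cross-block terms from the $\prodh$-edges, and I must check these precisely reproduce the exponents $q^{|x^{(1)}||y|}$, $q^{|x||y^{(2)}|}$, and $q^{|x||y|}$ on the three groups of terms. The careful part is confirming that when $I$ is a biideal of $Q$ the factor $P$ sits entirely below $I$ in $\leq_h$, contributing exactly $|P|\,|I|=|x||y^{(2)}|$ to the exponent, and symmetrically in the other case; the overlap term $I=Q$ then carries weight $q^{|P||Q|}=q^{|x||y|}$, giving the $-q^{|x||y|}x\otimes y$ correction. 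Once both compatibilities hold, the final assertion follows by specialisation: $(\h_{\PP},m,\Delta_q)$ is infinitesimal for every $q$, and setting $q=1$ in the $\prodh$-identity collapses all powers of $q$ to $1$, yielding precisely the infinitesimal compatibility for $(\h_{\PP},\prodh,\Delta_1)$.
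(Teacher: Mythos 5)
Your proposal is correct and takes essentially the same route as the paper's own proof: the same identification of the biideals of $PQ$ and of $P\prodh Q$ as the biideals of $Q$ together with the sets $Q\sqcup I''$ for $I''$ a biideal of $P$ (overlapping only in $I=Q$, which yields the correction term), the same three-block exponent identity $h_{P\setminus I}^{I}+h_{I\setminus J}^{J}=h_{P\setminus J}^{J}+h_{P\setminus I}^{I\setminus J}$ for coassociativity, and the same cross-pair counts ($0$ for $m$; $|P|\,|I|$, $|P\setminus I''|\,|Q|$ and $|P|\,|Q|$ for $\prodh$, where your $|x|\,|y^{(2)}|$ with $y^{(2)}$ the biideal agrees with the paper's itemized computation) followed by the $q=1$ specialisation. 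No gaps; your inclusion--exclusion phrasing over the two families of biideals is only cosmetically different from the paper's use of the reduced coproduct $\tdelta_q$.
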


{\bf Notation.} For all nonempty $P\in \PP$, we put $\tdelta_q(P)=\Delta_q(P)-P\otimes 1-1\otimes P$. \\

\begin{proof} Let $P$ be a double poset. Let $I$ be a biideal of $P$ and let $J$ be a biideal of $I$; then $J$ is a biideal of $P$.
Let $I$ be a biideal of $P$ and let $J$ be a biideal of $P\setminus I$; then $I\sqcup J$ is a biideal of $P$. Hence:
\begin{eqnarray*}
(Id \otimes \Delta_q) \circ \Delta_q(P)&=&\sum_{\mbox{\scriptsize $J\subseteq I$ biideals of $P$}} 
q^{h_{P \setminus I}^I+h_{I\setminus J}^J}P\setminus (I \cup J) \otimes J\setminus I \otimes I,\\
(\Delta_q \otimes Id) \circ \Delta_q(P)&=&\sum_{\mbox{\scriptsize $J\subseteq I$ biideals of $P$}} 
q^{h_{P \setminus J}^J+h_{P\setminus I}^{I\setminus J}}P\setminus (I \cup J) \otimes J\setminus I \otimes I.
\end{eqnarray*}
Moreover:
$$h_{P \setminus I}^I+h_{I\setminus J}^J=h_{P \setminus I}^J+h_{P\setminus I}^{I\setminus J}+h_{I\setminus J}^J
=h_{P \setminus J}^J+h_{P\setminus I}^{I\setminus J},$$
so $\Delta_q$ is coassociative. Let us prove the compatibility of the products and the coproducts. We restrict ourselves to $x=P,y=Q \in \PP$. 
The result is obvious if $P=1$ or $Q=1$. Let us assume that $P,Q \neq 1$. The nontrivial biideals of $PQ$ are 
the nontrivial biideals of $Q$, $Q$, and the biideals $IQ$, where $I$ is a nontrivial biideal of $P$. Consequently:
\begin{eqnarray*}
\tdelta_q(PQ)&=&\sum_{\mbox{\scriptsize $I$ nontrivial biideal of $Q$}} q^{h_{PQ\setminus I}^I}PQ \setminus I\otimes I \\
&&+\sum_{\mbox{\scriptsize $I$ nontrivial biideal of $P$}} q^{h_{P\setminus I}^{IQ}}P \setminus I\otimes IQ+q^{h_P^Q}P\otimes Q\\
&=&\sum_{\mbox{\scriptsize $I$ nontrivial biideal of $Q$}} q^{0+h_{Q\setminus I}^I}PQ \setminus I\otimes I \\
&&+\sum_{\mbox{\scriptsize $I$ nontrivial biideal of $P$}} q^{h_{P\setminus I}^I+0}P \setminus I\otimes IQ+q^0 P\otimes Q\\
\Delta_q(PQ)&=&PQ \otimes 1+1\otimes PQ+(P \otimes 1) \tdelta_q(Q)+\tdelta_q(P)(1\otimes Q)+P\otimes Q\\
&=&PQ \otimes 1+1\otimes PQ+(P\otimes 1)\Delta_q(Q)-P\otimes Q-PQ\otimes 1\\
&&+\Delta_q(P)(1\otimes Q)-P\otimes Q-1\otimes PQ+P\otimes Q\\
&=&(P\otimes 1)\Delta_q(Q)-P\otimes Q+\Delta_q(P)(1\otimes Q)-P\otimes Q.
\end{eqnarray*}

The nontrivial ideals of $P\prodh Q$ are:
\begin{itemize} 
\item nontrivial ideals $I$ of $Q$. In this case,
$$h_{P\prodh Q\setminus I}^I=h_{Q\setminus I}^I+h_P^I=h_{Q\setminus I}^++|I|.|P|.$$
\item $Q$. In this case:
$$h_P^Q=|P|.|Q|.$$
\item ideals of the form $I \prodh Q$, where $I$ is a nontrivial ideal of $P$. In this case:
$$h_{P\setminus I}^{I\prodh Q}=h_{P\setminus I}^Q+h_{P\setminus I}^Q=h_{P\setminus I}^Q+|P\setminus I|.|Q|.$$
\end{itemize}
\begin{eqnarray*}
\tdelta_q(P\prodh Q)&=&\sum_{\mbox{\scriptsize $I$ nontrivial biideal of $Q$}} q^{|I|.|P|}q^{h_{PQ\setminus I}^I}P\prodh Q \setminus I\otimes I \\
&&+\sum_{\mbox{\scriptsize $I$ nontrivial biideal of $P$}} q^{|P\setminus I|.|Q|}q^{h_{P\setminus I}^{IQ}}P \setminus I\otimes I\prodh Q
+q^{|P|.|Q|}q^{h_P^Q}P\otimes Q\\
\Delta_q(PQ)&=&PQ \otimes 1+1\otimes PQ+\sum q^{|P|.|Q^{(1)}|}P\prodh Q^{(1)} \otimes Q^{(2)}-P\prodh Q \otimes 1-q^{|P|.|Q|}P\otimes Q\\
&&+\sum q^{|P^{(1)}|.|Q|}P^{(1)}\otimes P^{(2)}\prodh Q-1\otimes P\prodh Q-q^{|P|.|Q|}P\otimes Q+q^{|P|.|Q|}P\otimes Q\\
&=&\sum q^{|P|.|Q^{(1)}|}P\prodh Q^{(1)} \otimes Q^{(2)}
+\sum q^{|P^{(1)}|.|Q|}P^{(1)}\otimes P^{(2)}\prodh Q-q^{|P|.|Q|}P\otimes Q.
\end{eqnarray*}
In particular, if $q=1$, we recover the axiom of an infinitesimal Hopf algebra. \end{proof} \\

{\bf Remarks.} \begin{enumerate}
\item Obviously, both $(\h_{\PP},m,\Delta_q)$ and $(\h_{\PP},\prodh,\Delta_q)$ are graded by the cardinality of the double posets.
\item The coproduct $\Delta_q$ restricted to $\h_{\PP}$ is the coproduct $\Delta_{(q,0,1,0)}$ of \cite{Foissy1}.
\end{enumerate}

{\bf Examples.}
$$\begin{array}{rclcrcl}
\tdelta_q(\tdeux)&=&\tun\otimes \tun,&&
\tdelta_q(\tun\tun)&=&q\tun \otimes \tun,\\
\tdelta_q(\ttroisdeux)&=&\tun \otimes \tdeux+\tdeux \otimes \tun, &&
\tdelta_q(\ttroisun)&=&\tun \otimes \tun\tun+q\tdeux\otimes \tun,\\
\tdelta_q(\ptroisun)&=&q\tun \otimes \tdeux+\tun\tun \otimes \tun,&&
\tdelta_q(\tdeux\tun)&=&q\tun \otimes \tun\tun+q^2\tdeux \otimes \tun,\\
\tdelta_q(\tun\tdeux)&=&q^2\tun \otimes \tdeux+q\tun\tun \otimes \tun,&&
\tdelta_q(\tun\tun\tun)&=&q^2\tun \otimes \tun\tun+q^2\tun\tun\otimes \tun.
\end{array}$$

\section{Bruhat order on plane posets}

\subsection{Definition of the partial order}

\begin{defi}\label{8}
Let $P,Q$ be two plane posets of the same cardinality. We denote by $\theta_{P,Q}$ the unique increasing bijection (for the total order) from $P$ to $Q$.
\end{defi}

{\bf Remark.} If $P,Q,R$ are plane posets of the same cardinality, then obviously, $\theta_{Q,P}=\theta_{P,Q}^{-1}$
and $\theta_{P,R}=\theta_{Q,R}\circ \theta_{P,Q}$.

\begin{lemma}
Let $P,Q \in \PP(n)$. The following assertions are equivalent:
\begin{enumerate}
\item $\forall x,y \in P$, $(\theta_{P,Q}(x)\leq_h \theta_{P,Q}(y) $ in $Q)\Longrightarrow (x \leq_h y $ in $ P)$.
\item $\forall x,y\in P$, $(x \leq_r y$ in $P) \Longrightarrow (\theta_{P,Q}(x) \leq_r \theta_{P,Q}(y)$ in $Q)$.
\end{enumerate}
If this holds, we shall say that $P\leq Q$.
\end{lemma}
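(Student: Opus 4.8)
The plan is to reduce both conditions to a single symmetric statement about the $\leq_r$ orders, and to drive everything with the incompatibility condition of Definition \ref{1}(2). Write $\theta:=\theta_{P,Q}$. Since $P$ and $Q$ are both identified with $\{1,\ldots,n\}$ as totally ordered sets, $\theta$ (and its inverse $\theta_{Q,P}$) is increasing for the total orders; hence for distinct $x,y$ one has $x<y$ in $P$ if and only if $\theta(x)<\theta(y)$ in $Q$, where $<$ denotes the total order. The key structural input is the dichotomy: for any two distinct elements $a,b$ of a plane poset with $a<b$ in the total order, exactly one of $a<_h b$ or $a<_r b$ holds. Indeed, incompatibility makes $a,b$ comparable for exactly one of $\leq_h,\leq_r$, and $a<b$ fixes the direction. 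I would apply this to each pair of $P$, taken in the direction of the total order.

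First I would note that both implications in (1) and (2) are vacuous unless the pair is oriented by the total order, so it suffices to verify them for $x<y$ in $P$ (so $\theta(x)<\theta(y)$ in $Q$). Restricted to such a pair, (1) reads ``$\theta(x)<_h\theta(y)$ in $Q$ implies $x<_h y$ in $P$'', and (2) reads ``$x<_r y$ in $P$ implies $\theta(x)<_r\theta(y)$ in $Q$''. I would then introduce the auxiliary condition $(\star)$: for all $x,y\in P$, $x<_r y$ in $P$ implies $\theta(x)<_r\theta(y)$ in $Q$, and prove $(1)\Leftrightarrow(\star)$ and $(2)\Leftrightarrow(\star)$. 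The equivalence $(2)\Leftrightarrow(\star)$ is immediate, since $\leq_r$ differs from $<_r$ only on the diagonal, where both assertions hold trivially.

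For the two nontrivial directions I would convert $\leq_h$-comparability into the \emph{failure} of $\leq_r$-comparability (and conversely) via incompatibility. For $(1)\Rightarrow(\star)$: given $x<_r y$ in $P$, apply the dichotomy in $Q$ to $(\theta(x),\theta(y))$; if they were $h$-comparable, (1) would force $x\leq_h y$ in $P$, contradicting that $x<_r y$ forbids $\leq_h$-comparability, so $\theta(x)<_r\theta(y)$. For $(\star)\Rightarrow(1)$: given $\theta(x)<_h\theta(y)$, apply the dichotomy in $P$ to $(x,y)$; if $x<_r y$, then $(\star)$ would force $\theta(x)<_r\theta(y)$, contradicting the $\leq_h$-comparability of $\theta(x),\theta(y)$, so $x<_h y$.

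The proof needs no computation; the one point I would treat most carefully — and the only real obstacle — is the repeated, correct use of incompatibility in both posets simultaneously: one must keep straight that $a<_h b$ precludes $\leq_r$-comparability of $a,b$ and conversely, and combine this with the fact that $\theta$ preserves the total order so that ``$x<y$'' and ``$\theta(x)<\theta(y)$'' are equivalent and the dichotomy is available on both sides at once. Everything else is bookkeeping.
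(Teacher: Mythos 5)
Your proposal is correct and follows essentially the same route as the paper: both arguments combine the fact that the increasing bijection $\theta_{P,Q}$ preserves the total order (so that comparable pairs map to pairs that are $\leq_h$- or $\leq_r$-comparable) with the incompatibility dichotomy of plane posets to rule out the wrong comparability. Your reduction to strict orders and the auxiliary condition $(\star)$ is just a more explicit bookkeeping of the paper's one-direction proof (the paper dispatches the converse as ``similar,'' which you spell out), with the diagonal case $x=y$ handled up front rather than via the observation that simultaneous $\leq_h$- and $\leq_r$-comparability forces equality.
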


\begin{proof} $1\Longrightarrow 2$. Let us assume that $x \leq_r y$ in $P$. As $\theta_{P,Q}$ is increasing, $\theta_{P,Q}(x)\leq_h \theta_{P,Q}(y)$
or $\theta_{P,Q}(x)\leq_r \theta_{P,Q}(y)$ in $Q$. If $\theta_{P,Q}(x)\leq_h \theta_{P,Q}(y)$, by hypothesis, $x \leq h y$ in $P$.
As $P$ is a plane poset, $x=y$, so in both cases $\theta_{P,Q}(x)\leq_r \theta_{P,Q}(y)$. \\

$2\Longrightarrow 1$. Similar proof. \end{proof}

\begin{prop}
For all $n \geq 1$, the relation $\leq$ is a partial order on $\PP(n)$. 
\end{prop}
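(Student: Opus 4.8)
The plan is to check the three defining properties of a partial order on $\PP(n)$: reflexivity, transitivity, and antisymmetry. Throughout I will use the remark following Definition \ref{8}, namely $\theta_{Q,P}=\theta_{P,Q}^{-1}$ and $\theta_{P,R}=\theta_{Q,R}\circ\theta_{P,Q}$, together with the equivalence of the two conditions established in the preceding lemma, which lets me freely pick whichever of the two formulations (the $\leq_h$ one or the $\leq_r$ one) is most convenient.

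Reflexivity is immediate: since $\theta_{P,P}$ is the identity of $P$, the second condition of the lemma reads $(x\leq_r y)\Longrightarrow(x\leq_r y)$, which always holds, so $P\leq P$. For transitivity, I would assume $P\leq Q$ and $Q\leq R$ and verify the second condition for $P\leq R$. Take $x,y\in P$ with $x\leq_r y$ in $P$. Applying $P\leq Q$ gives $\theta_{P,Q}(x)\leq_r\theta_{P,Q}(y)$ in $Q$; applying $Q\leq R$ to this pair then gives $\theta_{Q,R}(\theta_{P,Q}(x))\leq_r\theta_{Q,R}(\theta_{P,Q}(y))$ in $R$. Since $\theta_{P,R}=\theta_{Q,R}\circ\theta_{P,Q}$, this is exactly $\theta_{P,R}(x)\leq_r\theta_{P,R}(y)$ in $R$, which is the required condition for $P\leq R$.

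Antisymmetry is the real content. Assume $P\leq Q$ and $Q\leq P$, and write $\theta=\theta_{P,Q}$, an increasing bijection with $\theta^{-1}=\theta_{Q,P}$. Applying the second condition of the lemma to $P\leq Q$ yields, for all $x,y\in P$, the implication $x\leq_r y\Rightarrow\theta(x)\leq_r\theta(y)$. Applying the same condition to $Q\leq P$ and substituting $u=\theta(x)$, $v=\theta(y)$ (so that $\theta_{Q,P}(u)=x$, $\theta_{Q,P}(v)=y$) yields the reverse implication $\theta(x)\leq_r\theta(y)\Rightarrow x\leq_r y$. Combining them, $\theta$ preserves $\leq_r$-comparability in both directions: $x\leq_r y$ in $P\iff\theta(x)\leq_r\theta(y)$ in $Q$. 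Now I would invoke the plane poset incompatibility condition: for distinct $x,y$, one has $x\leq_h y$ precisely when $x$ and $y$ are $\leq_r$-incomparable and $x<y$ for the total order. Since $\theta$ is increasing for the total order and preserves $\leq_r$-comparability, both of these conditions transfer under $\theta$, whence $x\leq_h y\iff\theta(x)\leq_h\theta(y)$ as well. Thus $\theta$ preserves both partial orders, so it is an isomorphism of plane posets and $P=Q$ in $\PP(n)$.

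The only delicate step is antisymmetry: the defining relation only provides one-directional implications, so the key move is to pair the hypothesis $P\leq Q$ with $Q\leq P$ (using $\theta_{Q,P}=\theta^{-1}$ and the equivalence of the lemma) in order to upgrade these to the biconditional $x\leq_r y\iff\theta(x)\leq_r\theta(y)$, and then to use the incompatibility condition to recover the full preservation of $\leq_h$ from that of $\leq_r$. Reflexivity and transitivity, by contrast, are routine unwindings of the definitions.
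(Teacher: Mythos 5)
Your proof is correct and takes essentially the same route as the paper's: reflexivity and transitivity are routine unwindings of the definition using $\theta_{P,P}=\mathrm{Id}_P$ and $\theta_{P,R}=\theta_{Q,R}\circ\theta_{P,Q}$, and for antisymmetry you pair $P\leq Q$ with $Q\leq P$ via $\theta_{Q,P}=\theta_{P,Q}^{-1}$ to upgrade the one-way implication to a biconditional, then use the incompatibility condition together with the increasingness of $\theta_{P,Q}$ to transfer the other partial order, concluding that $\theta_{P,Q}$ is an isomorphism of plane posets. The only cosmetic difference is that you work with the $\leq_r$ formulation of the lemma where the paper starts from the $\leq_h$ one (so your transfer step uses the characterization of $<_h$ as total-order comparability plus $\leq_r$-incomparability, while the paper argues by the dual case analysis); the lemma's equivalence makes these interchangeable.
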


\begin{proof} Let us assume that $P\leq Q$ and $Q\leq P$. As $\theta_{Q,P}=\theta_{P,Q}^{-1}$, it satisfies the following assertion: 
$$\forall x,y \in P,\:(x \leq_h y \mbox{ in } P) \Longleftrightarrow (\theta_{P,Q}(x)\leq_h \theta_{P,Q}(y) \mbox{ in }Q).$$
Moreover, if $x \leq_r y$ in $P$, then, as $\theta_{P,Q}$ is increasing, $\theta_{P,Q}(x) \leq_h \theta_{P,Q}(y)$ 
or $\theta_{P,Q}(x)\leq_r \theta_{P,Q}(y)$ in $Q$. 
If $\theta_{P,Q}(x) \leq_h \theta_{P,Q}(y)$, then $x \leq_h y$ in $P$. By the incompatibility condition between $\leq_r$ and $\leq_h$, $x=y$,
so $\theta_{P,Q}(x)=\theta_{P,Q}(y)$, and $\theta_{P,Q}(x) \leq_r \theta_{P,Q}(y)$. 
In any case, $\theta_{P,Q}(x) \leq_r \theta_{P,Q}(y)$. Working with $\theta_{P,Q}^{-1}$, we obtain: 
$$\forall x,y \in P,\:(x \leq_r y \mbox{ in } P) \Longleftrightarrow (\theta_{P,Q}(x)\leq_r \theta_{P,Q}(y) \mbox{ in }Q).$$
So $\theta_{P,Q}$ is an isomorphism of plane posets. As a consequence, $P=Q$. \\

Let us assume that $P\leq Q$ and $Q\leq R$. As $\theta_{P,R}=\theta_{Q,R} \circ \theta_{P,Q}$, 
if $\theta_{P,R} (x) \leq_h \theta_{P,R}(y)$ in $R$, then $\theta_{P,Q}(x) \leq_h \theta_{P,Q}(y)$ 
in $Q$, so $x \leq_h y$ in $P$. So $P\leq R$.\\

Let $P\in \PP(n)$. The unique increasing bijection from $P$ to $P$ is $Id_P$, so, clearly, $P\leq P$. \end{proof}\\

{\bf Examples.} Here are the Hasse diagrams of $(\PP(2),\leq)$, $(\PP(3),\leq)$ and$(\PP(4),\leq)$:
$$\xymatrix{\tun \tun\ar@{-}[d]\\ \tdeux} \hspace{2cm}
\xymatrix{&\tun\tun\tun\ar@{-}[dl]\ar@{-}[dr]&\\
\tdeux\tun\ar@{-}[d]&&\tun\tdeux\ar@{-}[d]\\
\ttroisun\ar@{-}[dr]&&\ptroisun\ar@{-}[dl]\\
&\ttroisdeux&}$$
$$\xymatrix{&&&&&\tun\tun\tun\tun \ar@{-}[llld]\ar@{-}[rrrd]\ar@{-}[d]&&&&&\\
&&\tdeux\tun\tun\ar@{-}[ld] \ar@{-}[rrrd]|!{[rd];[rrr]}\hole &&&\tun\tdeux\tun\ar@{-}[lld]\ar@{-}[rrd]|!{[d];[rrr]}\hole&&&
\tun\tun\tdeux\ar@{-}[rd]\ar@{-}[llld]&&\\
&\ttroisun\tun\ar@{-}[ld]\ar@{-}[rd]|!{[ld];[rr]}\hole&&\ptroisun\tun \ar@{-}[llld]\ar@{-}[rrrd]|!{[rd];[rr]}\hole
&&\tdeux\tdeux \ar@{-}[ld]&&\tun\ttroisun\ar@{-}[ld] \ar@{-}[rrrd]|!{[rd];[rr]}\hole&&\tun\ptroisun \ar@{-}[ld] \ar@{-}[rd]&\\
\ttroisdeux\tun\ar@{-}[rd]&&\tquatreun\ar@{-}[ld] \ar@{-}[rd]&&\pquatrecinq \ar@{-}[ld] \ar@{-}[rrrd]|!{[rd];[rr]}\hole
&&\pquatresix\ar@{-}[ld]&&\pquatreun\ar@{-}[ld] \ar@{-}[rd]&&\tun\ttroisdeux \ar@{-}[ld]\\
&\tquatretrois \ar@{-}[rd]&&\tquatredeux\ar@{-}[rrd]|!{[ld];[rr]}\hole&&\pquatresept\ar@{-}[llld]\ar@{-}[rrrd]&&
\pquatredeux\ar@{-}[lld]|!{[ll];[rd]}\hole&&\pquatretrois \ar@{-}[ld]&\\
&&\tquatrequatre\ar@{-}[rrrd]&&&\pquatrehuit\ar@{-}[d]&&&\pquatrequatre\ar@{-}[llld]&&\\
&&&&&\tquatrecinq&&&&&}$$

\subsection{Isomorphism with the weak Bruhat order on permutations}

\begin{lemma} \label{11}
Let $\sigma \in \S_n$ and let $P=\Psi_n(\sigma)$. Let $1\leq i<j \leq n$. Then $\sigma$ is of the form $(\ldots ij \ldots)$ if, and only if,
the three following conditions are satisfied:
\begin{itemize}
\item $i<_h j$ in $P$.
\item If $x<_h j$ in $P$, then $x\leq_h i$ or $x \geq_r i$.
\item If $x>_h i$ in $P$, then $x \geq_h j$ or $x \leq_r j$.
\end{itemize}\end{lemma}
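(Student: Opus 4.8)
The plan is to transport everything through the bijection $\Psi_n$ of theorem~\ref{3} and reduce the three poset conditions to elementary statements about the positions of $i$ and $j$ in the word of $\sigma$. Write $a=\sigma^{-1}(i)$ and $b=\sigma^{-1}(j)$ for the positions at which the values $i$ and $j$ occur. Since $i<j$, the assertion that $\sigma$ has the form $(\ldots ij\ldots)$ is exactly the statement $b=a+1$. The definition of $P=\Psi_n(\sigma)$ gives, for any element $x$ sitting at position $c=\sigma^{-1}(x)$, the dictionary: $x<_h j$ iff $x<j$ and $c<b$; $x>_h i$ iff $x>i$ and $c>a$; $x\leq_h i$ iff $x=i$ or ($x<i$ and $c<a$); $x\geq_r i$ iff $x=i$ or ($x>i$ and $c<a$); $x\geq_h j$ iff $x=j$ or ($x>j$ and $c>b$); and $x\leq_r j$ iff $x=j$ or ($x<j$ and $c>b$). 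In particular condition~1, namely $i<_h j$, is equivalent to $a<b$.

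First I would treat the direct implication. Assuming $b=a+1$, condition~1 holds since $a<b$. For condition~2, take $x$ with $x<_h j$, so $c<b=a+1$, that is $c\leq a$. If $c=a$ then $x=i$ and $x\leq_h i$; if $c<a$ then, since $x<j$ forces $x<i$ or $x>i$, we obtain $x\leq_h i$ or $x\geq_r i$ respectively. Condition~3 is symmetric: if $x>_h i$ then $c>a$, hence $c\geq b$; either $c=b$, whence $x=j$ and $x\geq_h j$, or $c>b$, in which case $x\geq_h j$ or $x\leq_r j$ according as $x>j$ or $x<j$.

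Conversely, suppose the three conditions hold; condition~1 gives $a<b$, and I would establish $b=a+1$ by contradiction. If $b\geq a+2$, pick a position $c$ with $a<c<b$ and set $x=\sigma(c)$, so that $x\neq i,j$. Applying condition~2 to $x$: here $c<b$ while $c\not<a$ and $x\neq i$, so $x<_h j$ must fail, forcing $x>j$. Applying condition~3 to $x$: here $c>a$ while $c\not>b$ and $x\neq j$, so $x>_h i$ must fail, forcing $x<i$. The two conclusions $x>j$ and $x<i$, together with $i<j$, are incompatible, so no intermediate position exists and $b=a+1$.

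The only delicate point is the bookkeeping: one must keep the comparisons of \emph{values} and of \emph{positions} strictly separate and handle the boundary cases $x=i$ and $x=j$ (equivalently $c=a$ and $c=b$) correctly, since these are precisely where the non-strict relations $\leq_h,\geq_r,\geq_h,\leq_r$ differ from their strict versions. Once the dictionary above is written down, conditions~2 and~3 say exactly that no element can be wedged between $i$ and $j$ in position, which is the combinatorial content of $i$ and $j$ being adjacent in the word of $\sigma$.
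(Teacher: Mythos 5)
Your proof is correct and takes essentially the same route as the paper's: you unwind the definition of $\Psi_n$ into separate comparisons of values and positions, verify the three conditions by cases on the position of $x$ when $i$ and $j$ are adjacent, and for the converse rule out any letter lying between $i$ and $j$ in the word by contradiction. The only cosmetic difference is that where the paper splits the converse into three value-cases ($x<i<j$, $i<x<j$, $i<j<x$), you extract the incompatible pair $x>j$ and $x<i$ directly from the contrapositives of conditions~2 and~3; the content is identical.
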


\begin{proof} $\Longrightarrow$. By definition of $P$, indeed $i<_h j$ in $P$.

If $x<_h j$ in $P$, then $x<j$ and $\sigma^{-1}(x)<\sigma^{-1}(j)$.
If $x=i$, then $x \leq_h i$ and $x \geq_r i$. If $x\neq i$, then $x$ appears before $j$ in the word representing $\sigma$, so it appears before $i$.
So if $x<i$, then $x \leq_h i$ and if $x>i$, $x \geq_r i$.

If $x>_h i$, then $x>i$ and $\sigma^{-1}(x)> \sigma^{-1}(i)$. If $x=j$, then $x \geq_h j$ and $x \leq_r j$.
If $x\neq j$, then $x$ appears after $i$ in the word representing $\sigma$, so it appears after $j$. If $x >j$, then $x \geq_h j$ and
if $x <j$, then $x \leq_r j$. \\

$\Longleftarrow$. As $i<_h j$, $i$ appears before $j$ in the word representing $\sigma$. 
If there is a letter $x$ between $i$ and $j$ in this word, three cases are possible.
\begin{itemize}
\item $x<i<j$. Then $x <_h j$ and $x <_r i$, so we do not have $x \leq_h i$ nor $x \geq_r i$. This contradicts the second condition.
\item $i<x<j$. Then $x<_h j$ and $x>_h i$, so we do not have $x \leq_h i$ nor $x \geq_r i$. This contradicts the second condition.
\item $i<j<x$. Then $x>_h i$ and $x>_r j$, so we do not have $x \geq_h j$ nor $x \leq_r j$. This contradicts the third condition.
\end{itemize}
As a consequence, $i$ and $j$ are two consecutive letters of the word representing $\sigma$. \end{proof}\\

{\bf Notation}. Let $P$ be a double poset. We put $E(P)=\{(i,j) \in P^2\:\mid\: i<_h j\}$. \\

{\bf Remark.} By definition of the partial order on $\PP(n)$, $P \leq Q$ if, and only if, $E(Q)\subseteq E(P)$.
Consequently, $P=Q$ if, and only if, $E(P)=E(Q)$.

\begin{lemma}\label{12}
Let $1\leq i<j \leq n$ and $\sigma$ be a permutation of the form $(\ldots ij\ldots)$. We put $\tau=(ij) \circ \sigma=(\ldots ji \ldots)$
(the other letters being unchanged). Then $E(\Psi_n(\tau))=E(\Psi_n(\sigma))-\{(i,j)\}$.
\end{lemma}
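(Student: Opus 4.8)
The statement to prove is Lemma~\ref{12}: for $1 \leq i < j \leq n$ and $\sigma = (\ldots ij \ldots)$, setting $\tau = (ij) \circ \sigma$, we have $E(\Psi_n(\tau)) = E(\Psi_n(\sigma)) \setminus \{(i,j)\}$.

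Let me understand the setup. We have a permutation $\sigma$ containing $i$ immediately followed by $j$ (with $i < j$ as values). We form $\tau$ by swapping the values $i$ and $j$, so in the word $\tau$ reads $\ldots ji \ldots$ (now $j$ before $i$). We want to show the edge set $E$ of the associated plane posets differs exactly by removing $(i,j)$.

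**Understanding $E$ and the role of $\tau = (ij)\circ\sigma$.**
Recall $E(P) = \{(a,b) \in P^2 : a <_h b\}$, and $a <_h b$ in $P_\sigma$ means $a < b$ (as values) and $\sigma^{-1}(a) < \sigma^{-1}(b)$ (positions). Note $\tau = (ij) \circ \sigma$ means $\tau(k) = (ij)(\sigma(k))$, so $\tau^{-1} = \sigma^{-1} \circ (ij)$. Thus $\tau^{-1}$ agrees with $\sigma^{-1}$ except it swaps the positions assigned to values $i$ and $j$.

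**The approach I would take.**
The plan is to compare edges $(a,b)$ with $a <_h b$ directly across $\sigma$ and $\tau$. An edge $(a,b)$ lies in $E$ iff $a < b$ and the value $a$ appears before the value $b$ in the word. Since $\tau^{-1} = \sigma^{-1} \circ (ij)$, the position of value $a$ under $\tau$ equals its position under $\sigma$ unless $a \in \{i,j\}$, in which case $i$ and $j$ trade positions. So I would split into cases based on whether $a,b \in \{i,j\}$.

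First, the pairs $(a,b)$ with $\{a,b\} \cap \{i,j\} = \emptyset$ are unaffected: both the value-comparison and the relative order of $a,b$ in the word are identical for $\sigma$ and $\tau$, so these edges are in $E(\Psi_n(\sigma))$ iff they are in $E(\Psi_n(\tau))$. Second, the pair $(i,j)$ itself: in $\sigma$, $i$ precedes $j$ so $(i,j) \in E(\Psi_n(\sigma))$; in $\tau$, $j$ precedes $i$, so $i$ appears after $j$ and $(i,j) \notin E(\Psi_n(\tau))$. This accounts for the removal of exactly $(i,j)$.

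**The main obstacle: pairs involving exactly one of $i,j$.**
The delicate cases are pairs $(a,i)$, $(i,a)$, $(a,j)$, $(j,a)$ with $a \notin \{i,j\}$. Here swapping positions of $i$ and $j$ could in principle change which value comes first. The key fact making this work is Lemma~\ref{11}: because $i$ and $j$ are \emph{consecutive} letters in the word for $\sigma$, any other value $a$ is either before both or after both. Concretely, since $i,j$ are adjacent, for any $a \notin \{i,j\}$, the position of $a$ relative to the block $\{i,j\}$ is the same whether we compare it to $i$ or to $j$; and swapping $i \leftrightarrow j$ within this adjacent block does not move $a$ across either of them. I would argue: if $a$ precedes the block in $\sigma$, then $a$ precedes both $i$ and $j$ in $\sigma$ and still precedes both after the swap; symmetrically if $a$ follows the block. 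Hence for such $a$, the relative word-order of $a$ with $i$ equals that with $j$, and this is unchanged by the swap. Combined with the unchanged value-comparisons (swapping values $i,j$ does not alter whether $a < i$, $a < j$, etc., when $a$ is outside the interval — and care is needed when $i < a < j$, where I would use that no such $a$ sits between them in the word precisely because they are consecutive), each edge $(a,i),(i,a),(a,j),(j,a)$ is preserved. The heart of the argument is thus the adjacency of $i$ and $j$ in the word, which is exactly what Lemma~\ref{11} characterizes; I expect verifying the $i < a < j$ subcase to require the most care, since there the value-order and the possible word-positions interact.

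Finally, assembling these cases shows $E(\Psi_n(\tau))$ and $E(\Psi_n(\sigma))$ coincide on all pairs except $(i,j)$, which is present in the former's complement, giving $E(\Psi_n(\tau)) = E(\Psi_n(\sigma)) \setminus \{(i,j)\}$.
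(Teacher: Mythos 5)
Your proof is correct and takes essentially the same route as the paper's: a direct comparison of the two edge sets via case analysis on how a pair meets $\{i,j\}$, with the adjacency of $i$ and $j$ in the word guaranteeing that no other letter changes sides under the swap, while values (hence all value comparisons) are untouched. One minor remark: the adjacency you use is the hypothesis of the lemma itself, so the appeal to Lemma~\ref{11} is superfluous (the paper's proof does not invoke it), and the $i<a<j$ subcase you flag needs no special care, since membership of a pair in $E$ depends only on the fixed value order and the word order, both of which you have already shown are unchanged for every pair other than $(i,j)$.
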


\begin{proof} We put $P=\Psi_n(\sigma)$ and $Q=\Psi_n(\tau)$.

$E(Q)\subseteq E(P)-\{(i,j)\}$. If $(k,l) \in E(Q)$, then $k<l$ and $\tau^{-1}(k)<\tau^{-1}(l)$, so $(k,l) \neq (i,j)$.
If $k,l \neq i,j$, then $\tau^{-1}(k)=\sigma^{-1}(k)$ and $\tau^{-1}(l)=\sigma^{-1}(l)$, so $(k,l) \in E(P)$.
If $k=i$ or $j$, then $l$ appears in the word representing $\tau$ after $i$ or $j$, so after $i$ and $j$, so it also appears in the word representing $\sigma$
after $i$ and $j$. As a consequence, $(k,l) \in E(P)$. If $l=i$ or $j$, we can prove in the same way that $(k,l) \in E(P)$.

$E(P)-\{(i,j)\} \subseteq E(Q)$. Similar proof. \end{proof}\\

{\bf Notation.} Let $P,Q \in \PP(n)$. We assume that $P=Q=\{1,\ldots,n\}$ as totally ordered sets.
We shall say that $P \preceq Q$ if there exists $(i,j) \in E(P)$ such that $E(Q)=E(P)-\{(i,j)\}$. 

\begin{lemma}\label{13}
Let $P \leq Q$ in $\PP(n)$. There exists $P_0=P, \ldots,P_k=Q$, such that $P=P_0\preceq P_1\preceq \ldots \preceq P_k=Q$.
\end{lemma}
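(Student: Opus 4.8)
The plan is to argue by induction on the nonnegative integer $|E(P)|-|E(Q)|$. Recall from the remark relating $E$ and the order that $P\le Q$ means exactly $E(Q)\subseteq E(P)$, and that $E(P)=E(Q)$ forces $P=Q$. Hence if $|E(P)|-|E(Q)|=0$ we are done with the empty chain ($k=0$), and we may assume $E(Q)\subsetneq E(P)$, so that it suffices to find a single plane poset $P_1$ with $P\preceq P_1$ and $P_1\le Q$; applying the induction hypothesis to $P_1\le Q$ (which drops the difference by one) and prepending $P\preceq P_1$ then finishes.

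To build such a $P_1$ I would pass to permutations through the bijection $\Psi_n$ of Theorem \ref{3}, writing $P=\Psi_n(\sigma)$ and $Q=\Psi_n(\tau)$. Unravelling the definition of $<_h$ in $\Psi_n(\sigma)$, a pair $(i,j)$ with $i<j$ lies in $E(\Psi_n(\sigma))$ if and only if $i$ occurs before $j$ in the word of $\sigma$; thus $E(P)$ is the set of non-inversions of $\sigma$, and the inclusion $E(Q)\subseteq E(P)$ translates into the inclusion of the inversions of $\sigma$ into those of $\tau$. Since $E(P)\ne E(Q)$ we have $\sigma\ne\tau$.

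The key step, which is the only real content, is to locate a \emph{removable} edge of $E(P)$ that is missing from $E(Q)$: that is, two values $i<j$ consecutive in the word of $\sigma$ with $i$ immediately before $j$, and $(i,j)\notin E(Q)$. I would find such a pair among the adjacent ascents of $\sigma$, claiming that some position $k$ satisfies $\sigma(k)<\sigma(k+1)$ while $\sigma(k)$ occurs after $\sigma(k+1)$ in $\tau$. Suppose no such $k$ existed. Then at each adjacent ascent of $\sigma$ the two letters keep their relative order in $\tau$; at each adjacent descent the corresponding pair is an inversion of $\sigma$, hence of $\tau$ by the inclusion above, so there too the two letters keep their order in $\tau$. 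Consequently $\sigma(1),\dots,\sigma(n)$ appear in this very order in $\tau$, forcing $\tau=\sigma$, a contradiction. This produces $i=\sigma(k)<\sigma(k+1)=j$ consecutive in $\sigma$, so that $\sigma$ has the form $(\dots ij\dots)$ as in Lemma \ref{11}, with $(i,j)\in E(P)\setminus E(Q)$.

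Finally I would invoke Lemma \ref{12}: setting $P_1=\Psi_n((ij)\circ\sigma)$ gives $E(P_1)=E(P)-\{(i,j)\}$, so $P\preceq P_1$ by the definition of $\preceq$, while $(i,j)\notin E(Q)$ guarantees $E(Q)\subseteq E(P_1)$, i.e. $P_1\le Q$, with $|E(P_1)|-|E(Q)|=|E(P)|-|E(Q)|-1$. The induction hypothesis supplies a chain $P_1\preceq\cdots\preceq Q$, and prepending $P\preceq P_1$ yields the desired chain. I expect the whole difficulty to lie in the existence claim of the previous paragraph (an adjacent ascent of $\sigma$ that is inverted in $\tau$); once Lemmas \ref{11} and \ref{12} are available, everything else is bookkeeping.
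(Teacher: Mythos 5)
Your proof is correct, and its overall skeleton coincides with the paper's: induction on $\sharp(E(P)\setminus E(Q))$, reduction to producing a single step $P\preceq P_1$ with $P_1\leq Q$, passage to permutations via $\Psi_n$, and Lemma \ref{12} to realize that step by swapping two consecutive letters. The genuine difference is in the key existence step. The paper chooses a pair $(i,j)\in E(P)-E(Q)$ minimizing the distance $d$ between the letters $i$ and $j$ in the word of $\sigma$, and rules out $d\geq 2$ by a three-case analysis on an intermediate letter $x$ (the cases $x<i<j$, $i<x<j$, $i<j<x$), each case contradicting either the minimality of $d$ or $(i,j)\in E(P)-E(Q)$. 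You instead argue globally by contraposition: if every adjacent ascent of $\sigma$ kept its relative order in $\tau$, then, since adjacent descents are inversions of $\sigma$ and hence of $\tau$ (the inclusion $E(Q)\subseteq E(P)$ being equivalent to $\mathrm{Inv}(\sigma)\subseteq\mathrm{Inv}(\tau)$), all consecutive letters of $\sigma$ would keep their order in $\tau$, so $k\mapsto\tau^{-1}(\sigma(k))$ would be strictly increasing, forcing $\tau=\sigma$ and contradicting $E(P)\neq E(Q)$. Your argument is shorter and avoids the case analysis, at the cost of being less specific about which pair is removed (any adjacent ascent of $\sigma$ inverted in $\tau$ works, rather than a distance-minimizing pair); both routes deliver exactly the hypothesis of Lemma \ref{12}, and the remaining bookkeeping ($E(Q)\subseteq E(P_1)=E(P)-\{(i,j)\}$ since $(i,j)\notin E(Q)$, so the difference drops by one) is identical. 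One cosmetic remark: you do not actually need Lemma \ref{11} here, since adjacency of $i$ and $j$ in the word of $\sigma$ is obtained directly from your construction; the paper likewise uses Lemma \ref{11} only in the converse direction, in Lemma \ref{14}.
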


\begin{proof} By definition of the partial order of $\PP(n)$, if $i <_h j$ in $Q$, then $i<_h j$ in $P$. So $E(Q) \subseteq E(P)$.
We proceed by induction on $k=E(P)-E(Q)$. If $k=0$, then $P=Q$ and the result is obvious.
Let us assume that $k\geq 1$. We put $\sigma=\Psi_n^{-1}(P)$ and $\tau=\Psi_n^{-1}(Q)$. 
We choose $(i,j) \in E(P)-E(Q)$, such that the distance $d$ between the letters $i$ and $j$ in the word representing $\sigma$ is minimal.
Let us assume that $d \geq 2$. As $i<_h j$ in $P$, there exists a letter $x$ such that $\sigma=(\ldots i \ldots x\ldots j \ldots)$.
Three cases are possible.
\begin{itemize}
\item If $x<i<j$, then, in $P$, $x<_r i$, $x<_h j$ and $i<_h j$. Hence, $(x,i) \notin E(P)$, so $(x,i) \notin E(Q)$ and $x<_r i$ in $Q$ ;
as $(i,j) \notin E(Q)$, $i<_r j$ in $Q$ ; finally, $x<_r j$ in $Q$. Consequently, $(x,j) \in E(P)-E(Q)$: contradicts the minimality of $d$.
\item If $i<x<j$, then, in $P$, $i<_h x$, $x<_h j$ and $i<_h j$. By minimality of $d$, $i<_h x$ and $x<_h j$ in $Q$, so $i<_h j$ in $Q$:
contradicts that $(i,j) \in E(P)-E(Q)$.
\item If $i<j<x$, then, in $P$, $i<_h x$, $j<_r x$ and $i<_h j$. By minimality of $d$, $i<_h x$ in $Q$.
As $(j,x) \notin E(P),$ $(j,x) \notin E(Q)$, so $j<_r x$ in $Q$ ; as $(i,j) \notin E(Q)$, $i<_r j$ in $Q$, and finally $i<_r x$ in $Q$:
contradicts $i<_h x$ in $Q$.
\end{itemize}
We deduce that $d=1$: $i$ and $j$ are two consecutive letters in $\sigma$. We then take $P_1=\Psi_n^{-1}((ij) \circ \sigma)$.
By lemma \ref{12}, $E(P_1)=E(P)-\{(i,j)\}$. We then apply the induction hypothesis to the couple $(P_1,Q)$. \end{proof}

\begin{lemma} \label{14}
Let $P,Q \in \PP(n)$, such that $P \preceq Q$. We put $E(Q)=E(P)-\{(i,j)\}$. Then $i,j$ are two consecutive letters in $\Psi_n^{-1}(P)$.
Moreover, $\Psi_n^{-1}(Q)$ is obtained by permuting the two consecutive letters $ij$ in $\Psi_n^{-1}(P)$.
\end{lemma}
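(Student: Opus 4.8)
The plan is to set $\sigma=\Psi_n^{-1}(P)$ and $\tau=\Psi_n^{-1}(Q)$ and to prove the two assertions in turn: first the consecutiveness of $i,j$ in $\sigma$, then the description of $\tau$. Since $(i,j)\in E(P)$, we have $i<_h j$ in $P$, so by Theorem \ref{3} $\sigma^{-1}(i)<\sigma^{-1}(j)$, i.e.\ $i$ appears before $j$ in the word representing $\sigma$. The whole content of the first assertion is therefore that no letter lies strictly between $i$ and $j$ in this word.

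To establish this I would reuse, almost verbatim, the case analysis carried out inside the proof of Lemma \ref{13}, the crucial simplification being that here $E(P)-E(Q)=\{(i,j)\}$ is a \emph{single} pair, so $(i,j)$ is automatically a minimal-distance missing edge and, more importantly, it is the only $h$-edge of $P$ destroyed in $Q$. Suppose a letter $x$ occurs between $i$ and $j$ in $\sigma$ and distinguish the three cases $x<i<j$, $i<x<j$, $i<j<x$. In each case one translates the position of $x$ into $\leq_h$/$\leq_r$ relations in $P$, then transports the surviving $h$-edges and the two broken relations (namely $(i,j)$ and one relation involving $x$) to $Q$, using that $E(Q)=E(P)-\{(i,j)\}$ and that $\leq_h,\leq_r$ are partial orders on $Q$. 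The outcome is a contradiction of one of three kinds: either a second pair is forced out of $E(Q)$ (impossible, as only $(i,j)$ is removed), or $(i,j)$ is forced to stay in $E(Q)$ (contradicting its removal), or one obtains simultaneously $i<_h x$ and $i<_r x$ in $Q$, which the incompatibility condition forbids. Hence no such $x$ exists and $i,j$ are consecutive, with $\sigma=(\ldots ij\ldots)$.

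For the second assertion, let $\tau'=(ij)\circ\sigma$, which is exactly the word obtained from $\sigma$ by swapping the consecutive letters $i$ and $j$, so $\tau'=(\ldots ji\ldots)$. By Lemma \ref{12}, $E(\Psi_n(\tau'))=E(\Psi_n(\sigma))-\{(i,j)\}=E(P)-\{(i,j)\}=E(Q)$. Since a plane poset is determined by its edge set (the remark that $P=Q$ if and only if $E(P)=E(Q)$), we conclude $\Psi_n(\tau')=Q$, hence $\tau=\tau'=(ij)\circ\sigma$, which is precisely the claimed permutation of the two consecutive letters.

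The main obstacle is the middle paragraph: the three-case chase through the orders of $Q$ is where all the work lies, and the delicate case is $i<j<x$, where the contradiction comes not from counting edges but from violating the incompatibility condition after composing the two inherited $\leq_r$-relations. Everything else is bookkeeping given Theorem \ref{3}, Lemma \ref{12}, and the fact that $E$ determines the plane poset.
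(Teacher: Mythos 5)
Your proof is correct, and the three contradictions you name do materialize when the details are filled in: for $x<i<j$ one gets $x<_r i$ in $Q$ (since $(x,i)\notin E(P)\supseteq E(Q)$) and $i<_r j$ in $Q$, hence $x<_r j$ in $Q$, which expels the second pair $(x,j)\in E(P)$ from $E(Q)$; for $i<x<j$ the surviving edges $(i,x),(x,j)\in E(P)-\{(i,j)\}=E(Q)$ and transitivity of $\leq_h$ force $(i,j)$ back into $E(Q)$; and for $i<j<x$ the surviving edge $(i,x)$ gives $i<_h x$ in $Q$ while $i<_r j<_r x$ in $Q$ gives $i<_r x$, violating the incompatibility condition. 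Where you differ from the paper: it never argues directly about an intermediate letter in this proof. Instead it verifies the three intrinsic conditions of Lemma \ref{11} for the pair $(i,j)$ --- with a subcase analysis ($x<i$, $x>i$, $x<j$, $x>j$) whose chases are in exact correspondence with your three cases --- and then invokes Lemma \ref{11} to convert those poset-theoretic conditions into consecutiveness in the word. You bypass Lemma \ref{11} entirely and instead inline the word-position argument of Lemma \ref{13}, correctly observing that when $E(P)-E(Q)$ is a singleton the minimal-distance device of that proof becomes automatic and the surviving edges transport to $Q$ for free. The computational content is the same in both routes (transporting $h$- and $r$-relations across $E(Q)=E(P)-\{(i,j)\}$ and using incompatibility), so the trade-off is organizational: the paper's factoring through Lemma \ref{11} reuses an already-proved equivalence and keeps the contradiction hunting inside the posets, while yours is more self-contained and makes transparent why Lemma \ref{13}'s argument specializes to this situation. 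Your second half --- apply Lemma \ref{12} to $(ij)\circ\sigma$ and conclude from the fact that $E$ determines the plane poset --- is exactly the paper's argument.
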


\begin{proof} Let us prove that $i,j$ satisfy the three conditions of lemma \ref{11}. As $(i,j) \in E(P)$, $i<_h j$ in $P$. If $x<_h j$ in $P$, three cases are possible.
\begin{itemize}
\item If $x=i$, then $x\leq_h i$ and $x\geq_r i$ in $P$.
\item If $x<i$, let us assume that $x<_r i$ in $P$. Hence, $(x,i) \notin E(P)$, so $(x,i) \notin E(Q)$ and $x<_r i$ in $Q$.
As $i<_r j$ in $Q$, $x<_r j$ in $Q$. As $E(P)=E(Q) \cup\{(i,j)\}$, $(x,j) \notin E(P)$ and $x<_r j$ in $P$: contradiction.
So $x<_h j$.
\item If $x>i$, let us assume that $x>_h i$ in $P$. As $E(Q)=E(P)-\{(i,j)\}$, $i<_h x$ and $x<_h j$ in $Q$, so $i<_h j$ in $Q$:
contradiction, $(i,j) \notin E(Q)$. So $x>_r i$.
\end{itemize}

Let us now prove the third condition. If $x>_h i$ in $P$, three cases are possible.
\begin{itemize}
\item If $x=j$, then $x\geq_h j$ and $x \leq_r j$ in $P$.
\item If $x<j$, let us assume that $x<_h j$ in $P$. As $E(Q)=E(P)-\{(i,j)\}$, $x<_h j$ and $i <_h x$ in $Q$,
so $i<_h j$ in $Q$: contradiction, $(i,j) \notin E(Q)$. So $x<_r j$ in $Q$.
\item If $x>j$, let us assume that $x>_r j$ in $P$. As $E(Q)=E(P)-\{(i,j)\}$, $x>_r j$ and $j>_r i$ in $Q$, so $x>_r i$ in $Q$.
As $E(P)=E(Q)\cup\{(i,j)\}$, $x>_r i$ in $P$: contradicts $x>_h i$ in $P$. So $x>_h j$ in $P$.
\end{itemize}
Finally, the three conditions of lemma \ref{11} are satisfied. \\

We put $\Psi_n^{-1}(P)=\sigma$ and $\Psi_n^{-1}(Q)=\tau$. By lemma \ref{11}, $ij$ are two consecutive letters in the word representing $\sigma$.
Let $\varsigma$ be the permutation obtained by permuting these two letters. By lemma \ref{12}, $E(\Psi_n(\varsigma))=E(P)-\{(i,j)\}=E(Q)$,
so $\Psi_n(\varsigma)=Q$ and $\tau=\varsigma$. \end{proof}

\begin{theo}
We partially order $\S_n$ by the weak Bruhat order \cite{Stanley1,Stanley2}. For all $n \geq 0$, the bijection $\Psi_n$ is an isomorphism of posets, that is to say:
for all $\sigma,\tau \in \S_n$, $\sigma \leq \tau$ if, and only if, $\Psi_n(\sigma)\leq \Psi_n(\tau)$ in $\PP(n)$.
\end{theo}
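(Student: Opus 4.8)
The plan is to reduce the statement to the covering-relation level and then invoke the combinatorial lemmas already established. Recall that the weak Bruhat order on $\S_n$ is generated by its covering relations: $\sigma$ is covered by $\tau$ (written $\sigma \lessdot \tau$) precisely when $\tau = (i\,j)\circ\sigma$ for two \emph{consecutive} letters $i<j$ appearing as $(\ldots ij\ldots)$ in $\sigma$, so that the pair $(i,j)$ is an inversion of $\tau$ but not of $\sigma$. Thus the weak Bruhat order is characterized by: $\sigma\leq\tau$ if and only if the inversion set of $\sigma$ is contained in the inversion set of $\tau$, and one passes from a permutation to a larger one by adjacent transpositions each adding exactly one inversion. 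The whole point of Lemmas \ref{11}--\ref{14} is to transport this picture through $\Psi_n$ onto the sets $E(P)$.

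First I would make the dictionary precise. For $\sigma\in\S_n$ with $P=\Psi_n(\sigma)$, by definition of $P_\sigma$ the relation $i<_h j$ (with $i<j$) holds exactly when $\sigma^{-1}(i)<\sigma^{-1}(j)$, i.e. when $(i,j)$ is \emph{not} an inversion of $\sigma$; hence $E(P)$ is the complement (within $\{(i,j):i<j\}$) of the inversion set of $\sigma$. Consequently $E(Q)\subseteq E(P)$ translates to the inversion set of $\sigma$ being contained in that of $\tau$, which is exactly $\sigma\leq\tau$ in the weak Bruhat order. By the Remark following Lemma \ref{11}, $E(Q)\subseteq E(P)$ is in turn precisely the defining condition $P\leq Q$ in $\PP(n)$. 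This already gives the equivalence at the level of comparisons, \emph{provided} one knows that the weak Bruhat order is the order induced by inversion-set inclusion; I would state this as a standard fact (citing \cite{Stanley1,Stanley2}), but to stay self-contained I prefer to argue through covering relations using the lemmas.

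The core of the argument is therefore to match covering relations. I would show: $P\preceq Q$ in $\PP(n)$ (i.e. $E(Q)=E(P)-\{(i,j)\}$ for some $(i,j)\in E(P)$) holds if and only if $\Psi_n^{-1}(P)\lessdot\Psi_n^{-1}(Q)$ is a weak Bruhat covering. Lemma \ref{14} supplies one direction: if $P\preceq Q$ then $i,j$ are consecutive letters of $\sigma=\Psi_n^{-1}(P)$ and $\tau=\Psi_n^{-1}(Q)$ is obtained by swapping them, so $\sigma\lessdot\tau$; conversely Lemma \ref{12} shows that swapping two consecutive letters $ij$ of $\sigma$ removes exactly the pair $(i,j)$ from $E$, giving $P\preceq Q$. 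Then Lemma \ref{13} shows every relation $P\leq Q$ factors as a chain of coverings $P=P_0\preceq\cdots\preceq P_k=Q$, and on the permutation side such chains are exactly the saturated chains of the weak Bruhat order. Chaining these equivalences yields: $\Psi_n(\sigma)\leq\Psi_n(\tau)$ if and only if there is a sequence of adjacent transpositions, each adding an inversion, carrying $\sigma$ to $\tau$, which is the definition of $\sigma\leq\tau$ in the weak Bruhat order.

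The main obstacle is purely bookkeeping: one must verify that the covering notion $\preceq$ defined via $E$ coincides on \emph{both} sides with the genuine covering relation of the respective posets, and that the chain decomposition (Lemma \ref{13}) and the single-step translation (Lemmas \ref{12}, \ref{14}) assemble without gaps. In particular I would be careful that Lemma \ref{13} produces a chain for $\leq$ while Lemma \ref{14} guarantees each $\preceq$ step is realized by an adjacent transposition on the permutation side, and that no spurious relations are introduced; these are exactly the four lemmas, so the theorem follows by stringing them together. I expect the write-up to be short: invoke Lemma \ref{13} to reduce $P\leq Q$ to a chain of $\preceq$-steps, apply Lemma \ref{14} to read each step as an adjacent transposition increasing the number of inversions, and note that this is precisely the description of the weak Bruhat covering relations, whence $\sigma\leq\tau$; the converse runs identically using Lemma \ref{12}.
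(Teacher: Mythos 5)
Your proposal is correct and follows essentially the same route as the paper: the forward direction by applying Lemma \ref{12} to the chain of adjacent transpositions furnished by the definition of the weak order, and the converse by combining the chain decomposition of Lemma \ref{13} with the single-step translation of Lemma \ref{14}. Your aside that one could instead cite the standard characterization of the weak order by inversion-set inclusion (noting that $E(\Psi_n(\sigma))$ is the complement of the inversion set of $\sigma$) is a legitimate shortcut, but the self-contained covering-relation argument you settle on is exactly the paper's proof.
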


\begin{proof} We consider $\sigma,\tau \in \S_n$. We put $\Psi_n(\sigma)=P$ and $\Psi_n(\tau)=Q$. \\

Let us assume that $\sigma \leq\tau$ in $\S_n$. By definition of the weak Bruhat order, there exists 
$\sigma_0=\sigma,\ldots,\sigma_k=\tau$, such that $\sigma_{p+1}$ is obtained from $\sigma_p$ by permuting two consecutives letters $ij$, with $i<j$,
in the word representing $\sigma_p$. By lemma \ref{12}, for all $p$, $E(\Psi_n(\sigma_{p+1}))\subseteq E(\Psi_n(\sigma_p))$.
Consequently, $E(Q) \subseteq E(P)$, so $P\leq Q$. \\

Let us assume that $P \leq Q$. From lemma \ref{13}, there exists $P=P_0\preceq \ldots \preceq P_k=Q$.
We put $\sigma_p=\Psi_n^{-1}(P_p)$ for all $0\leq p \leq k$. From lemma \ref{14}, we obtain $\sigma_{p+1}$ from $\sigma_p$ 
by permuting two consecutives letters $ij$, with $i<j$, in the word representing $\sigma_p$. By definition of the weak Bruhat order, $\sigma \leq \tau$. \end{proof}

\subsection{Properties of the partial order}

\begin{prop}\label{16}
Let $P_1,Q_1 \in \PP(k)$, $P_2,Q_2 \in \PP(l)$. The following conditions are equivalent:
\begin{enumerate}
\item $P_1 P_2 \leq Q_1 Q_2$.
\item $P_1 \prodh P_2 \leq Q_2 \prodh Q_2$.
\item $P_1 \leq Q_1$ and $P_2 \leq Q_2$.
\end{enumerate}\end{prop}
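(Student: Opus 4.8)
The plan is to translate everything into the language of the sets $E(\cdot)$, using the Remark following Lemma \ref{12}: for $R,S \in \PP(n)$, one has $R \leq S$ if and only if $E(S) \subseteq E(R)$. Thus the proposition reduces to comparing the sets $E$ of the four products, and the first step is to compute these explicitly. Identify each of $P_1P_2$, $Q_1Q_2$, $P_1\prodh P_2$, $Q_1\prodh Q_2$ with $\{1,\ldots,k+l\}$, the first $k$ letters forming the left factor and the last $l$ the right factor; this is legitimate because in both products a left element is related to every right element (by $\leq_r$ for $m$, by $\leq_h$ for $\prodh$) and therefore precedes it in the total order. Writing $E(P_2)[+k]=\{(i+k,j+k)\mid (i,j)\in E(P_2)\}$ and $B=\{(i,j)\mid 1\leq i\leq k<j\leq k+l\}$, the definitions of the two products give
$$E(P_1 P_2)=E(P_1)\cup E(P_2)[+k],\qquad E(P_1\prodh P_2)=E(P_1)\cup E(P_2)[+k]\cup B,$$
and likewise for $Q_1Q_2$ and $Q_1\prodh Q_2$. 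Indeed, since $P_1,P_2$ are plane subposets, the $\leq_h$-relations internal to each block are exactly $E(P_1)$ and $E(P_2)[+k]$; for $m$ there is no $\leq_h$-relation across the two blocks (a left and a right element are comparable for $\leq_r$, hence not for $\leq_h$ by the incompatibility axiom), while for $\prodh$ every left element is $<_h$ every right element, contributing precisely $B$.

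The crucial observation is that the three pieces $E(P_1)\subseteq\{1,\ldots,k\}^2$, $E(P_2)[+k]\subseteq\{k+1,\ldots,k+l\}^2$ and $B$ occupy disjoint regions of $\{1,\ldots,k+l\}^2$, determined solely by which blocks the two coordinates lie in. Consequently the inclusion $E(Q_1Q_2)\subseteq E(P_1P_2)$ can be checked region by region: it holds if and only if $E(Q_1)\subseteq E(P_1)$ and $E(Q_2)[+k]\subseteq E(P_2)[+k]$, that is, if and only if $E(Q_1)\subseteq E(P_1)$ and $E(Q_2)\subseteq E(P_2)$. By the Remark this is exactly the condition $(P_1\leq Q_1$ and $P_2\leq Q_2)$, which proves $1\Leftrightarrow 3$. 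For $2\Leftrightarrow 3$ the same region-wise comparison applies, the only new feature being the block $B$; but $B$ depends only on $k$ and $l$, since $P_1,Q_1\in\PP(k)$ and $P_2,Q_2\in\PP(l)$, so it appears identically in $E(P_1\prodh P_2)$ and $E(Q_1\prodh Q_2)$ and contributes only the trivial inclusion $B\subseteq B$. Hence $E(Q_1\prodh Q_2)\subseteq E(P_1\prodh P_2)$ is again equivalent to $E(Q_1)\subseteq E(P_1)$ and $E(Q_2)\subseteq E(P_2)$, i.e. to condition $3$.

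The only real work lies in the bookkeeping of the first paragraph: verifying that the total order on each product splits the ground set into the announced left and right blocks, and that the across-block $\leq_h$-relations are empty for $m$ and complete for $\prodh$. Both facts follow directly from the defining clauses of the two products together with the incompatibility axiom, so I expect no genuine obstacle once $E$ of each product is pinned down. After that the three equivalences are immediate consequences of the purely set-theoretic description of $\leq$ as reverse inclusion of the sets $E$.
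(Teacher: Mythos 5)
Your proof is correct and is essentially the paper's own argument in different notation: the paper likewise observes that the unique increasing bijection on each product is $\theta_{P_1,Q_1}\otimes\theta_{P_2,Q_2}$ (whence $1$ or $2$ immediately implies $3$) and then checks the $\leq_h$-relations case by case, within each block and across the two blocks, which is exactly your region-by-region comparison of the sets $E$. Your explicit formulas $E(P_1P_2)=E(P_1)\cup E(P_2)[+k]$ and $E(P_1\prodh P_2)=E(P_1)\cup E(P_2)[+k]\cup B$ simply package the same case analysis, so nothing needs to be added.
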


\begin{proof}
We put $\theta_i=\theta_{P_i,Q_i}$, $i=1,2$. It is clear that the unique increasing bijection from $P_1 P_2$ to
$Q_1Q_2$ and the unique increasing bijection from $P_1\prodh P_2$ to $Q_1 \prodh Q_2$ are both $\theta=\theta_1 \otimes \theta_2$. 
As an immediate consequence, $1$ or $2$ implies $3$.\\

$3\Longrightarrow 1$. Let us assume that $\theta(i)<_h \theta(j)$ in $Q_1Q_2$. Two cases are possible.
\begin{itemize}
\item $i,j \in P_1$. Then $\theta_1(i)<_h \theta_1(j)$ in $Q_1$, so $i<_h j$ in $P_1$, so $i<_h j$ in $P_1P_2$.
\item $i,j \in P_2$. Then $\theta_2(i)<_h \theta_2(j)$ in $Q_2$, so $i<_h j$ in $P_2$, so $i<_h j$ in $P_1P_2$.
\end{itemize}
So $P_1P_2 \leq Q_1Q_2$. \\

$3 \Longrightarrow 2$. Let us assume that $\theta(i)<_h \theta(j)$ in $Q_1 \prodh Q_2$. Three cases are possible.
\begin{itemize}
\item $i,j \in P_1$. Then $\theta_1(i)<_h \theta_1(j)$ in $Q_1$, so $i<_h j$ in $P_1$, so $i<_h j$ in $P_1 \prodh P_2$.
\item $i,j \in P_2$. Then $\theta_2(i)<_h \theta_2(j)$ in $Q_2$, so $i<_h j$ in $P_2$, so $i<_h j$ in $P_1 \prodh P_2$.
\item $i \in P_1$ and $j \in P_2$. Then $i<_h j$ in $P_1 \prodh P_2$.
\end{itemize}
So $P_1 \prodh P_2 \leq Q_1 \prodh Q_2$.\end{proof}

\begin{defi}
Let $P=(P,\leq_h,\leq_r)$ be a plane poset. We put $\iota(P)=(P,\leq_r,\leq_h)$. Note that $\iota$ is an involution of $\PP$.
\end{defi}

\begin{prop}
For any $P,Q \in \PP(n)$, $P\leq Q$ if, and only if, $\iota(Q)\leq \iota(P)$.
\end{prop}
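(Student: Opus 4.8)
The plan is to reduce the statement to the purely combinatorial characterization $P\leq Q\iff E(Q)\subseteq E(P)$ recorded in the Remark following the definition of $E(P)$, and then to show that the involution $\iota$ acts on the invariant $E(\cdot)$ by complementation, so that it reverses inclusions.

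First I would check that $\iota$ fixes the underlying total order. Since the total order $\leq$ attached to a plane poset is defined symmetrically by $x\leq y\iff(x\leq_h y\text{ or }x\leq_r y)$, exchanging $\leq_h$ and $\leq_r$ leaves $\leq$ unchanged. Hence $\iota(P)$ carries the same totally ordered underlying set as $P$, so we may keep the identification $\iota(P)=\{1,\ldots,n\}=P$ of totally ordered sets. In particular the increasing bijections entering the definition of $\leq$ are the identity for both $(P,Q)$ and $(\iota(P),\iota(Q))$, so the characterization $P\leq Q\iff E(Q)\subseteq E(P)$ applies verbatim to the pair $(\iota(P),\iota(Q))$ as well.

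The key step is to compute $E(\iota(P))$. Write $F=\{(i,j)\mid 1\leq i<j\leq n\}$. For distinct $i<j$, the incompatibility axiom of Definition \ref{1} together with the totality of $\leq$ forces exactly one of $i<_h j$ or $i<_r j$ to hold. Thus $E(P)=\{(i,j)\in F\mid i<_h j\}$ and its complement is $F\setminus E(P)=\{(i,j)\in F\mid i<_r j\}$. Since the relation $<_h$ of $\iota(P)$ is exactly the relation $<_r$ of $P$, this complement is precisely $E(\iota(P))$; that is, $E(\iota(P))=F\setminus E(P)$, and likewise $E(\iota(Q))=F\setminus E(Q)$.

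Finally I would conclude by complementation: $P\leq Q\iff E(Q)\subseteq E(P)$, while $\iota(Q)\leq\iota(P)\iff E(\iota(P))\subseteq E(\iota(Q))\iff F\setminus E(P)\subseteq F\setminus E(Q)\iff E(Q)\subseteq E(P)$. The two conditions coincide, which proves the equivalence. The only genuine point requiring care — the main, though very mild, obstacle — is the partition claim that $F$ splits as the $\leq_h$-pairs together with the $\leq_r$-pairs, i.e.\ that $F\setminus E(P)$ is exactly the set of $\leq_r$-comparable pairs; this is where the plane poset structure is actually used, and it follows directly from the incompatibility condition and Proposition on the totality of $\leq$.
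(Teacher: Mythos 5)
Your proof is correct and in substance coincides with the paper's: both arguments rest on the observation that $\iota$ preserves the induced total order (so the relevant increasing bijections are unaffected, indeed the identity under the standard identification), combined with the exchange of $\leq_h$ and $\leq_r$. Your complementation step $E(\iota(P))=F\setminus E(P)$ is exactly the paper's passage between the two equivalent conditions of the lemma defining $\leq$ (condition 1 being the inclusion $E(Q)\subseteq E(P)$, condition 2 the complementary inclusion on $r$-comparable pairs, their equivalence being the incompatibility dichotomy you invoke), so the two proofs differ only in bookkeeping.
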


\begin{proof} Let $P,Q \in \PP(n)$. As the total orders on $R$ and $\iota(R)$ are identical for any $R\in \PP(n)$,  
the unique increasing bijection from $\iota(Q)$ to $\iota(P)$ is $\theta_{Q,P}$. Hence:
\begin{eqnarray*}
P\leq Q&\Longleftrightarrow&\forall x,y \in P, \:(\theta_{P,Q}(x) \leq_h \theta_{P,Q}(y)\mbox{ in }Q) \Longrightarrow (x \leq_h y \mbox{ in }P)\\
&\Longleftrightarrow&\forall x,y \in \iota(P), \:(\theta_{P,Q}(x) \leq_r \theta_{P,Q}(y)\mbox{ in }\iota(Q)) \Longrightarrow 
(x \leq_r y \mbox{ in }\iota(P))\\
&\Longleftrightarrow&\forall x',y' \in \iota(Q), \:(x' \leq_r y' \mbox{ in }\iota(Q)) \Longrightarrow 
(\theta_{Q,P}(x') \leq_r \theta_{Q,P}(y') \mbox{ in }\iota(P))\\
&\Longleftrightarrow&\iota(Q)\leq \iota(P).
\end{eqnarray*}\end{proof}\\

%
%

{\bf Remark.} Let $P \in \PP(n)$. We put $\sigma=\Psi_n^{-1}(P)$. Then $\Psi_n^{-1}(\iota(P))= \sigma \circ (n\ldots 1)$.

\subsection{Restriction to plane forests}

Let us consider the restriction the partial order to the set of plane forests. 
\begin{defi}\label{19}
Let $F$ be a plane forest and let $s$ be a vertex of $F$ which is not a leaf. 
The \emph{transformation of $F$ at vertex $s$} is the plane forest obtained in one of the following way:
$$\begin{array}{rccc}
\mbox{If $s$ is not a root: }&\begin{picture}(55,60)(-25,0)
\put(0,0){\circle*{5}}\put(0,0){\line(0,1){20}}\put(0,20){\circle*{5}}\put(0,20){\line(0,1){20}}\put(0,40){\circle*{5}}
\put(0,40){\line(-1,1){15}}\put(0,40){\line(1,1){15}}\put(5,17){$s$}\put(-2,50){.}\put(0,50){.}\put(2,50){.}
\put(0,0){\line(-4,1){25}}\put(0,0){\line(4,1){25}}\put(-10,5){.}\put(-8,5){.}\put(-6,5){.}\put(10,5){.}\put(8,5){.}\put(6,5){.}
\put(0,20){\line(-4,1){25}}\put(0,20){\line(-1,1){25}}\put(-20,28){.}\put(-20,30){.}\put(-20,32){.}\put(-25,0){\dashbox{1}(50,55)}
\end{picture}&\begin{picture}(22,0)(0,0)\put(0,10){$ \longrightarrow $}
\end{picture}&\begin{picture}(55,60)(-25,0)
\put(0,0){\circle*{5}}\put(0,0){\line(1,2){10}}\put(0,0){\line(-1,2){10}}\put(-10,20){\circle*{5}}\put(10,20){\circle*{5}}
\put(-5,17){$s$}\put(0,0){\line(-4,1){25}}\put(0,0){\line(4,1){25}}\put(-14,5){.}\put(-12,5){.}\put(-10,5){.}\put(12,5){.}
\put(10,5){.}\put(8,5){.}\put(10,20){\line(0,1){35}}\put(10,20){\line(-2,3){23.5}}\put(-2,50){.}\put(0,50){.}\put(2,50){.}
\put(-10,20){\line(-3,5){15}}\put(-10,20){\line(-4,1){15}}\put(-22,26){.}\put(-22,28){.}\put(-22,30){.}\put(-25,0){\dashbox{1}(50,55)}
\end{picture},\\
\mbox{If $s$ is a root: }&\begin{picture}(55,60)(-25,0)
\put(0,0){\circle*{5}}\put(0,0){\line(1,2){10}}\put(0,0){\line(-1,2){10}}\put(-10,20){\circle*{5}}\put(10,20){\circle*{5}}
\put(-2,6){$s$}\put(0,0){\line(-4,1){25}}\put(-12,5){.}\put(-10,5){.}\put(-8,5){.}\put(10,20){\line(0,1){35}}
\put(10,20){\line(-2,3){23.5}}\put(-2,50){.}\put(0,50){.}\put(2,50){.}\put(-10,20){\line(-3,5){15}}\put(-10,20){\line(-4,1){15}}
\put(-22,26){.}\put(-22,28){.}\put(-22,30){.}\put(-25,0){\dashbox{1}(50,55)}
\end{picture}&\begin{picture}(22,0)(0,0)
\put(0,10){$ \longrightarrow $}
\end{picture}&\begin{picture}(55,60)(-25,0)
\put(-10,0){\circle*{5}}\put(-10,20){\circle*{5}}\put(10,0){\circle*{5}}\put(-7,4){$s$}\put(-10,0){\line(0,1){20}}
\put(-10,20){\line(-3,5){15}}\put(-10,20){\line(-4,1){15}}\put(-22,26){.}\put(-22,28){.}\put(-22,30){.}\put(10,0){\line(0,1){55}}
\put(10,0){\line(-2,5){22}}\put(2,50){.}\put(0,50){.}\put(-2,50){.}\put(-10,0){\line(-3,1){15}}\put(-25,0){\dashbox{1}(50,55)}
\put(-15,5){.}\put(-17,5){.}\put(-19,5){.}\end{picture},\end{array}$$
the part of the forest outside the frame being unchanged. 
\end{defi}

{\bf Remark.} Up to a vertical symmetry, these transformation are used in \cite{Foissy3} in order to define a partial order on
the set of plane forests, making it isomorphic to the Tamari poset.

\begin{prop}
Let $F,G$ be two plane forests of degree $n$. Then $F \leq G$ if, and only if, $G$ is obtained from $F$ by a finite number of transformations of definition \ref{19}.
\end{prop}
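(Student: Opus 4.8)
The plan is to reduce everything to the combinatorics of the descendant intervals of the two forests. Recall that for a plane forest the total order on $\{1,\dots,n\}$ is the prefix (depth-first) reading of the Hasse forest, so that the subtree rooted at a vertex $s$ occupies a whole interval of labels; I would write $M_F(s)$ for its largest element, so that the set of strict $\leq_h$-descendants of $s$ in $F$ is exactly $\{s+1,\dots,M_F(s)\}$. Consequently $E(F)=\{(s,x):s<x\leq M_F(s)\}$, and using the criterion $F\leq G\iff E(G)\subseteq E(F)$ I obtain the clean reformulation $F\leq G\iff M_G(s)\leq M_F(s)$ for every vertex $s$. I would record this equivalence first, together with the observation that the rightmost child $t$ of a non-leaf vertex $s$ is the vertex whose subtree is the last interval $[m_F(s),M_F(s)]$ of the subtree of $s$, where $m_F(s)$ denotes its smallest label.

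Next I would compute the effect of a single transformation of Definition~\ref{19} on $E$. A transformation at a non-leaf vertex $s$ detaches the subtree of its rightmost child $t$ and reinserts it immediately to the right of what remains of the subtree of $s$ --- as a new right brother of $s$ if $s$ is not a root, as a new tree if $s$ is a root. This operation does not change the prefix reading outside that block, so it preserves the labelling by $\{1,\dots,n\}$ and clearly yields again a plane forest $F'$. The only $\leq_h$-relations that are destroyed are the pairs $(s,x)$ with $x$ in the moved subtree, that is with $m_F(s)\leq x\leq M_F(s)$, and no new relation is created; hence $E(F')=E(F)\setminus\{(s,x):m_F(s)\leq x\leq M_F(s)\}\subsetneq E(F)$, so that $F\leq F'$. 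By transitivity of $\leq$ this already proves that if $G$ is obtained from $F$ by a finite sequence of transformations then $F\leq G$.

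For the converse I would argue by induction on $|E(F)\setminus E(G)|$, the case $F=G$ being trivial. Assume $F<G$. By the computation above, performing the transformation at a non-leaf $s$ keeps us below $G$ (that is, $F'\leq G$) precisely when the deleted pairs avoid $E(G)$, which by the interval description means exactly $m_F(s)>M_G(s)$. The heart of the proof is thus to produce such an $s$: I would take $s_0$ to be the \emph{largest} vertex with $M_F(s_0)>M_G(s_0)$, which exists because $F\neq G$ while $M_G\leq M_F$ everywhere. Such an $s_0$ is non-leaf in $F$, and if one had $m_F(s_0)\leq M_G(s_0)$, then its rightmost child, the vertex $t$ of label $m_F(s_0)$, would satisfy $s_0<_h t$ in $G$, so that the $G$-subtree of $t$ is contained in that of $s_0$ and $M_G(t)\leq M_G(s_0)<M_F(s_0)=M_F(t)$; this exhibits a vertex $t>s_0$ with $M_F(t)>M_G(t)$, contradicting the maximality of $s_0$. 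Therefore $m_F(s_0)>M_G(s_0)$, the transformation at $s_0$ produces a plane forest $F'$ with $F<F'\leq G$ and $|E(F')\setminus E(G)|<|E(F)\setminus E(G)|$, and the induction hypothesis applied to $(F',G)$ finishes the proof. The main obstacle is exactly this last step: nothing forces a covering step of the full order on $\PP(n)$ to stay inside $\PF$ (indeed the chains of Lemma~\ref{13} typically leave $\PF$), so one cannot simply invoke Lemma~\ref{13}; it is the extremal choice of $s_0$ that guarantees a legal forest move which does not overshoot $G$.
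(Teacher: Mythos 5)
Your proof is correct, but for the nontrivial direction it takes a genuinely different route from the paper's. The paper proves that $F\leq G$ yields a chain of transformations by induction on $n$ through the structural decomposition of $F$: if $F=\tun\prodh F'$ is a tree one writes $G=(\tun\prodh G')G''$, applies the induction hypothesis to $F'\leq G'G''$, and recovers $G$ from $\tun\prodh(G'G'')$ by peeling subtrees off the root; if $F=(\tun\prodh F')F''$ with $F''\neq 1$ one shows $G=G'G''$ splits compatibly and invokes Proposition \ref{16} together with the induction hypothesis on each factor. You instead encode a forest by the vector $s\mapsto M_F(s)$ of right endpoints of the subtree intervals, reformulate the order as the componentwise comparison $M_G\leq M_F$ (essentially the bracket-vector description of the Tamari order, which meshes nicely with the corollary that follows the proposition), observe that a transformation at $s$ lowers the single coordinate $M_F(s)$ to $m_F(s)-1$, i.e.\ removes exactly the interval of pairs $\{(s,x)\mid m_F(s)\leq x\leq M_F(s)\}$ from $E(F)$, and run a greedy induction on $\sharp\left(E(F)\setminus E(G)\right)$, where the extremal choice of $s_0$ (the largest vertex with $M_F(s_0)>M_G(s_0)$) secures the legality condition $m_F(s_0)>M_G(s_0)$ via the identity $M_F(t)=M_F(s_0)$ for the rightmost child $t$; I checked this contradiction argument and it is sound. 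What each approach buys: the paper's is shorter once Proposition \ref{16} is in hand, while yours is algorithmic --- it produces an explicit monotone sequence of moves $F<F_1<\ldots<G$ and yields, as a byproduct, the componentwise-vector characterization of $\leq$ on $\PF(n)$. Two points you should spell out in a final write-up: the interval lemma you merely ``recall'' is not stated in the paper and needs its short proof (in a plane forest the set $\{x\mid s\leq_h x\}$ is the interval $[s,M_F(s)]$ of the total order, with the children's blocks arranged consecutively in $\leq_r$-order, whence $M_F(t)=M_F(s)$ for the rightmost child $t$; this follows from the fact that in a forest the $\leq_h$-ancestors of a vertex form a chain, together with transitivity of $\leq_r$), and the one-line reason why $s_0$ cannot be a leaf, namely $M_F(s_0)>M_G(s_0)\geq s_0$. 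Your easy direction coincides with the paper's, and your closing remark is apposite: Lemma \ref{13} cannot be invoked here, since its covering chains in $\PP(n)$ generally leave $\PF$, and it is precisely the extremal choice of $s_0$ that replaces it.
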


\begin{proof}
$\Longrightarrow$. By induction on $n$. If $n=1$, then $F=G=\tun$ and the result is obvious. Let us assume that $n \geq 2$. 

{\it First case}. Let us assume that $F$ is a plane tree $F=\tun \prodh F'$. We put $G=(\tun \prodh G')G''$, where $F',G',G''$ are plane forests.
The increasing bijection from $F'$ to $G'G''$ is $(\Theta_{F,G})_{\mid F'}$, as the root of $F$ and the root of $\tun \prodh F'$ are the smallest elements 
for their total orders. By the induction hypothesis, one can obtain $G'G''$ from $F'$ by a finite number of transformations. 
So $\tun \prodh(G'G'')$ can be obtained from $F$ by a finite number of transformations. Applying transformations to the left root of $\tun\prodh(G'G''))$,
one can obtain $G$ from$\tun \prodh(G'G'')$ by a finite number of transformations.

{\it Second case}. Let us assume that $F$ is not a tree. We put $F=(\tun \prodh F')F''$, where $F',F''$ are plane forests, $F''\neq 1$.
We put $\Theta_{F,G}(\tun \prodh F')=G'$ and $\Theta_{F,G}(F'')=G''$. Let us consider $x \in G'$ and $y\in G''$. 
Then $\Theta_{F,G}^{-1}(x)\leq_r \Theta_{F,G}^{-1}(y)$ in $F$,
so $x\leq y$ in $G$. Moreover, if $x<_h y$ in $G$, then $(x,y) \in E(G)$. As $F\leq G$, $(\Theta_{F,G}^{-1}(x),
\Theta_{F,G}^{-1}(y)) \in E(F)$, so $\Theta_{F,G}^{-1}(x)<_h \Theta_{F,G}^{-1}(y)$ in $F$: contradiction. So $x<_r y$ in $G$. 
Consequently, $G=G'G''$. By proposition \ref{16}, $\tun \prodh F' \leq G'$ and $F'' \leq G''$. By the induction hypothesis, one can obtain $G'$ from $\tun \prodh F'$
and $G''$ from $F''$ by a certain number of transformations. Hence, we can obtain $G=G'G''$ from $F=(\tun \prodh F')F''$ by a certain number of transformations.\\

$\Longleftarrow$. By transitivity, It is enough to prove it if $G$ is obtained from $F$ by a transformation. It is clear that this transformation does not affect
the total order on the vertices of $F$, so the increasing bijection from $F$ to $G$ is the identity. Obviously, if $x \leq_h y$ in $G$,
then $x \leq_h y$ in $F$, so $F \leq G$.\end{proof}\\

From \cite{Foissy3}, we recover the injection fo the Tamari poset into the Bruhat poset:

\begin{cor}
The poset of plane forests of degree $n$ is isomorphic to the Tamari poset on plane binary trees with $n+1$ leaves (or $n$ internal vertices).
\end{cor}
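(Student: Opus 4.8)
The plan is to read the statement off the previous proposition together with the construction of \cite{Foissy3}, so that essentially no new computation is required. By the previous proposition, for plane forests $F,G$ of degree $n$ we have $F\leq G$ if and only if $G$ is obtained from $F$ by a finite sequence of the transformations of definition \ref{19}. In other words, the poset $(\PF(n),\leq)$ is exactly the poset whose generating (covering) relations are these transformations. Hence it suffices to identify this transformation poset with the Tamari poset on plane binary trees with $n+1$ leaves.

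To do this I would make precise the remark following definition \ref{19}. Let $\phi$ be the \emph{vertical symmetry} of plane forests: the involution of $\PF(n)$ that reflects a Hasse graph top-to-bottom, exchanging roots and leaves while preserving the left-to-right order of the planar embedding. Then $\phi$ is a bijection of $\PF(n)$, and by direct inspection of the two diagrams in definition \ref{19} it carries each transformation performed at a vertex $s$ of $F$ to the transformation used in \cite{Foissy3} performed at the corresponding vertex of $\phi(F)$. Consequently $\phi$ sends each generating move of $(\PF(n),\leq)$ to a generating move of the poset defined in \cite{Foissy3}, and conversely; a bijection that carries covering relations to covering relations in both directions is automatically an isomorphism of posets. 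By the main result of \cite{Foissy3}, that latter poset is the Tamari poset, and composing with the classical Catalan bijection between plane forests with $n$ vertices and plane binary trees with $n+1$ leaves (equivalently $n$ internal vertices) — under which these rooted-forest transformations correspond to the elementary rotations generating the Tamari order — yields the claimed isomorphism.

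The only genuine work lies in the second step, namely checking that $\phi$ intertwines the two families of local moves once the conventions of \cite{Foissy3} are matched against definition \ref{19}; this is a reading of the pictures rather than an argument. The main obstacle is therefore purely bookkeeping: pinning down the precise vertical reflection and the left/right side conventions so that our transformations become exactly those of \cite{Foissy3}, and in particular verifying that $\phi$ respects the orientation of the moves. Should the reflection reverse the direction of the generating relations, the conclusion still follows, since the Tamari poset is self-dual (via the left-right mirror symmetry of binary trees); in either case $(\PF(n),\leq)$ is isomorphic to the Tamari poset.
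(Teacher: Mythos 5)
Your proposal is correct and is essentially the paper's own (implicit) argument: the corollary is stated there as an immediate consequence of the preceding proposition together with the remark after definition \ref{19} that, up to a vertical symmetry, these transformations are exactly those used in \cite{Foissy3} to put the Tamari order on plane forests. One minor caveat: the transformations are only asserted to \emph{generate} the order and need not all be covering relations, so your intertwining of the two families of moves should be phrased in terms of generating relations rather than covers (the reflexive--transitive closures then correspond, and your conclusion, including the self-duality fallback, goes through unchanged).
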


\section{Link with the infinitesimal structure}

\subsection{A lemma on the Bruhat order}

\begin{lemma}\label{22}
Let $P,Q,R$ be three double posets.
\begin{enumerate}
\item $P\prodh Q \leq R$ if, and only if,  there exists a biideal $I_0$ of $R$, such that $P\leq R\setminus I_0$
and $Q\leq I_0$. Moreover, if this holds, $I_0$ is unique and $I_0=\theta_{PQ,R}(Q)$.
\item $PQ \leq R$ if, and only if, there exists a plane subposet $I_0$ of $R$, such that $R=(R\setminus I_0) I_0$,
$P \leq R\setminus I_0$  and $Q\leq I_0$. Moreover, if this holds, $I_0$ is unique and $I_0=\theta_{P\prodh Q,R}(Q)$.
\item $\iota(PQ) \leq R$ if, and only if, there exists a biideal $I_0$ of $R$, such that $\iota(R\setminus I_0)\leq P$
and $\iota(I_0)\leq Q$. Moreover, if this holds, $I_0$ is unique and $I_0=\theta_{PQ,R}(Q)$.
\item $\iota(P\prodh Q) \leq R$ if, and only if, there exists a plane subposet $I_0$ of $R$, such that $R=(R\setminus I_0) I_0$,
$\iota(R\setminus I_0)\leq P$  and $\iota(I_0)\leq Q$. Moreover, if this holds, $I_0$ is unique and $I_0=\theta_{P\prodh Q,R}(Q)$.
\end{enumerate}\end{lemma}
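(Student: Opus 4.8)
The plan is to reduce the whole statement to the combinatorial reformulation of the order used in the proof of Lemma~\ref{13}: regarding $P,Q,R$ as plane posets with $|R|=|P|+|Q|$ and all three identified with initial segments of $\mathbb{N}$ via their total orders, one has $A\leq B$ exactly when $E(B)\subseteq E(A)$. First I would collect two structural facts. The total orders of $PQ$ and $P\prodh Q$ coincide (in both, every element of $P$ precedes every element of $Q$), so $\theta_{PQ,R}=\theta_{P\prodh Q,R}$ is the identity under the standard identification, and $\theta_{PQ,R}(Q)$ is simply the block of the top $|Q|$ elements of $R$. Moreover, by the Remark following the definition of biideals, a biideal of $R$ is precisely a final segment for the total order; hence biideals are linearly ordered by inclusion and uniquely determined by their cardinality. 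These two facts already settle the existence-and-uniqueness clauses: any $I_0$ satisfying $Q\leq I_0$ has $|I_0|=|Q|$, and there is a single biideal of that cardinality, namely $\theta_{PQ,R}(Q)$.

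Next I would prove parts 1 and 2 together by a direct inspection of edge sets. Writing $P=\{1,\dots,k\}$ and $Q=\{k+1,\dots,n\}$ for the two blocks, the definitions of the products give
$$E(P\prodh Q)=E(P)\sqcup E(Q)\sqcup C,\qquad E(PQ)=E(P)\sqcup E(Q),$$
where $C$ is the set of all crossing pairs $(x,y)$ with $x\leq k<y$: in $P\prodh Q$ every crossing pair is an $h$-relation, whereas in $PQ$ every crossing pair is an $r$-relation and so contributes nothing to $E$. For part 1, split an arbitrary pair of $E(R)$ according to whether it lies in the bottom block, the top block, or crosses; since every crossing pair already belongs to $E(P\prodh Q)$, the crossing pairs of $R$ impose no constraint, and $E(R)\subseteq E(P\prodh Q)$ decouples into $E(R\setminus I_0)\subseteq E(P)$ and $E(I_0)\subseteq E(Q)$, that is $P\leq R\setminus I_0$ and $Q\leq I_0$, with $I_0=\theta_{PQ,R}(Q)$. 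For part 2 the only change is that $E(PQ)$ contains no crossing pair, so $PQ\leq R$ additionally forces $R$ to have no crossing $h$-relation; since each crossing pair is comparable for the total order, it is then forced to be an $r$-relation, which is exactly the assertion $R=(R\setminus I_0)I_0$. The two remaining conditions are unchanged.

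Finally I would derive parts 3 and 4 formally from 1 and 2 by means of the involution. The definitions of the two products and of $\iota$ yield at once $\iota(PQ)=\iota(P)\prodh\iota(Q)$ and $\iota(P\prodh Q)=\iota(P)\,\iota(Q)$, while the preceding Proposition gives $A\leq B\Longleftrightarrow\iota(B)\leq\iota(A)$. Thus for part 3 the hypothesis $\iota(PQ)\leq R$ reads $\iota(P)\prodh\iota(Q)\leq R$; applying part 1 produces a biideal $I_0$ with $\iota(P)\leq R\setminus I_0$ and $\iota(Q)\leq I_0$, and rewriting each of these through the Proposition turns them into $\iota(R\setminus I_0)\leq P$ and $\iota(I_0)\leq Q$. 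Part 4 is the identical argument applied to part 2.

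The conceptual content here is light once the reformulation $A\leq B\Longleftrightarrow E(B)\subseteq E(A)$ and the identification of biideals with final segments are in place; the single point that genuinely must be handled with care is the bookkeeping that separates the two products, namely that $E(P\prodh Q)$ contains \emph{every} crossing pair while $E(PQ)$ contains \emph{none}. This one difference is precisely what upgrades the bare ``biideal $I_0$'' of part 1 to the ``biideal carrying a composition structure $R=(R\setminus I_0)I_0$'' of part 2, and keeping that distinction straight—rather than any delicate estimate—is where the work lies. For the last two parts nothing further is needed beyond verifying that $\iota$ interchanges $m$ and $\prodh$ and reverses $\leq$.
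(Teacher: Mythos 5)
Your proof is correct and takes essentially the same route as the paper's: the paper's own argument also identifies $I_0$ with the top $|Q|$ elements of $R$ (using that a biideal is exactly a final segment of the total order, and that $R=(R\setminus I_0)I_0$ forces the same in part 2), runs the same bottom/top/crossing case split that your $E$-set bookkeeping encodes via the Remark $A\leq B\Longleftrightarrow E(B)\subseteq E(A)$, and disposes of parts 3 and 4 exactly as you do, via $\iota(PQ)=\iota(P)\prodh\iota(Q)$, $\iota(P\prodh Q)=\iota(P)\iota(Q)$ and the antitonicity of $\iota$. Your observation that the total orders of $PQ$ and $P\prodh Q$ coincide, so that $\theta_{PQ,R}=\theta_{P\prodh Q,R}$, is a nice explicit reconciliation of the two notations appearing in the statement, but it does not change the substance of the argument.
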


\begin{proof}  1. $\Longrightarrow$. We consider $I_0=\theta_{P\prodh Q,R}(Q)$. If $x \in I_0$ and $y\in R$ satisfy $x \leq y$,
then $\theta_{P\prodh Q,R}^{-1}(x) \in Q$ and $\theta^{-1}_{P\prodh Q,R}(x) \leq \theta^{-1}_{P\prodh Q,R}(y)$, 
so $\theta^{-1}_{P\prodh Q,R}(y)\in Q$ and $y\in I_0$: hence, $I_0$ is a biideal. Moreover, $\theta_{P,R\setminus I_0}$ is the restriction 
of $\Theta_{P\prodh Q,R}$ to $P$ and $\theta_{Q,I_0}$ is the restriction of $\Theta_{P\prodh Q,R}$ to $Q$; as $P\prodh Q \leq R$,
$P\leq R\setminus I_0$ and $Q \leq I_0$.

1. $\Longleftarrow$.  Let $I$ be a such a biideal. As $\iota(Q) \leq I$, $|I|=|Q|=|I_0|$. As $I$ is a biideal, $|I|$ is made of the $|Q|$ greatest elements of $R$.
As $\theta_{P\prodh Q,R}$ is increasing and as the $|Q|$ greatest elements of $P\prodh Q$ are the elements of $Q$, $I=\theta_{P\prodh Q,R}(Q)=I_0$.
Let $x,y\in P\prodh Q$, such that $\theta_{P\prodh Q,R}(x) \leq_h \theta_{P\prodh Q,R}(y)$. As $I=I_0$ is a biideal, three cases are possible:
\begin{itemize}
\item $x,y \in P$. As $P\leq R\setminus I_0$, then $x\leq_h y$ in $P$, hence in $P\prodh Q$.
\item $x,y \in Q$. As $Q\leq I_0$, then $x\leq_h y$ in $Q$, hence in $P\prodh Q$.
\item $x\in P$, $y \in Q$. Then $x \leq_h y$ in $P\prodh Q$.
\end{itemize}
So $P\prodh Q \leq R$. \\

2. $\Longrightarrow$.  We consider $I_0=\theta_{PQ,R}(Q)$. 
If $x\in R\setminus I_0$ and $y\in I_0$, then  $\theta_{PQ,R}^{-1}(x) \in P$ and  $\theta_{PQ,R}^{-1}(y) \in Q$,
so $\theta_{PQ,R}^{-1}(x)\leq_r \theta_{PQ,R}^{-1}(y)$. As $PQ\leq R$, $x\leq_r y$, so $R=(R\setminus I_0)I_0$.
Moreover, $\theta_{P,R\setminus I_0}$ is the restriction of $\Theta_{PQ,R}$ to $P$ and $\theta_{Q,I_0}$ is the restriction of $\Theta_{PQ,R}$ to $Q$; 
as $PQ \leq R$, $P\leq R\setminus I_0$ and $Q \leq I_0$.\\

2. $\Longleftarrow$. Let $I$ be such a subposet. As $Q \leq I$, $|Q|=|I|=|I_0|$. Moreover, $R=(R\setminus I) I$, so the elements of $I$
are the $|Q|$ greatest elements of $R$. Hence, $I=I_0=\theta_{PQ,R}(Q)$. 
Let $x,y\in PQ$, such that $\theta_{PQ,R}(x) \leq_h \theta_{PQ,R}(y)$. As $R=(R\setminus I)I$, $x,y$ are both in $P$ or both in $Q$.
As $P\leq R\setminus I$ and $Q \leq I$, $x \leq_h y$ in $P$ or in $Q$, hence in $PQ$. So $PQ \leq R$. \\

3 and 4. Reformulations of the first two points, with the observations that $\iota(PQ)=\iota(P)\prodh \iota(Q)$, $\iota(P\prodh Q)=\iota(P)\iota(Q)$
and $\iota$ is decreasing for the Bruhat order $\leq$. \end{proof}

\subsection{Construction of the Hopf pairing}

{\bf Notations.} Let $P,Q \in \PP(n)$. We put:
\begin{eqnarray*}
\phi(P,Q)&=&\sharp\{(x,y)\in P^2\mid x<_r y\mbox{ and }\theta_{P,Q}(x)<_h \theta_{P,Q}(y)\}\\
&&+\sharp\{(x,y)\in P^2\mid x<_h y\mbox{ and }\theta_{P,Q}(x)<_r \theta_{P,Q}(y)\}.
\end{eqnarray*}

\begin{theo}\label{23}
Let $P,Q\in \PP$. We put:
$$\langle P,Q \rangle_q=\left\{\begin{array}{l}
q^{\phi(P,Q)} \mbox{ if }\iota(P)\leq Q,\\
0\mbox{ if not.}
\end{array}\right.$$
This pairing is bilinearly extended to $\h_{\PP}$. 
Then $\langle-,-\rangle_q$ is a symmetric Hopf pairing on $(\h_{\PP},m,\Delta_q)$. It is nondegenerate if, and only if, $q\neq 0$.
\end{theo}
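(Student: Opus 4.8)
The plan is to verify three things in turn: the symmetry of $\langle-,-\rangle_q$, the Hopf compatibility with $(m,\Delta_q)$, and the nondegeneracy criterion. Throughout I use that the pairing is graded, i.e. $\langle P,Q\rangle_q=0$ unless $|P|=|Q|$, so that the unit/counit axioms $\langle 1,P\rangle_q=\varepsilon(P)=\langle P,1\rangle_q$ are immediate.

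For symmetry I would first note that the two supports agree: since $\iota$ is an involution reversing $\leq$, one has $\iota(P)\leq Q\iff \iota(Q)\leq\iota(\iota(P))=P$, so $\langle P,Q\rangle_q\neq 0\iff\langle Q,P\rangle_q\neq 0$. When this holds I compare the exponents through the bijection $(x,y)\mapsto(\theta_{P,Q}(x),\theta_{P,Q}(y))$ from $P^2$ to $Q^2$: since $\theta_{Q,P}=\theta_{P,Q}^{-1}$, this map carries each of the two families of pairs defining $\phi(P,Q)$ onto the other family defining $\phi(Q,P)$ (the $<_r\!\to\!<_h$ family onto the $<_h\!\to\!<_r$ family and vice versa), whence $\phi(P,Q)=\phi(Q,P)$ and $\langle P,Q\rangle_q=\langle Q,P\rangle_q$. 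A pleasant consequence is that I then only need one of the two Hopf-pairing identities, the other following by symmetry from $\langle x,yz\rangle_q=\langle yz,x\rangle_q$.

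Next, the compatibility $\langle PQ,R\rangle_q=\sum\langle P,R^{(1)}\rangle_q\langle Q,R^{(2)}\rangle_q$, with $\Delta_q(R)=\sum_{I}q^{h_{R\setminus I}^I}(R\setminus I)\otimes I$ over biideals $I$. A term on the right is nonzero exactly when $\iota(P)\leq R\setminus I$ and $\iota(Q)\leq I$, that is, by the order-reversal of $\iota$, when $\iota(R\setminus I)\leq P$ and $\iota(I)\leq Q$. By Lemma \ref{22}(3), applied with $\iota(PQ)=\iota(P)\prodh\iota(Q)$, such a biideal exists iff $\iota(PQ)\leq R$ — i.e. iff the left-hand side is nonzero — and it is then unique, equal to $I_0=\theta_{PQ,R}(Q)$. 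So the two sides have identical support, and the problem reduces to the exponent identity
$$\phi(PQ,R)=h_{R\setminus I_0}^{I_0}+\phi(P,R\setminus I_0)+\phi(Q,I_0).$$
I would prove this by splitting the ordered pairs $(x,y)\in(PQ)^2$ according to whether both lie in $P$, both lie in $Q$, or they are separated. Since $P,Q$ are plane subposets of $PQ$ and $R\setminus I_0,I_0$ are plane subposets of $R$, with $\theta_{PQ,R}$ restricting to $\theta_{P,R\setminus I_0}$ and $\theta_{Q,I_0}$, the first two blocks contribute exactly $\phi(P,R\setminus I_0)$ and $\phi(Q,I_0)$. For a separated pair one has $x\leq_r y$ in $PQ$ whenever $x\in P,\,y\in Q$ (and never $x<_h y$, by the incompatibility condition), so such a pair contributes precisely when $\theta_{PQ,R}(x)<_h\theta_{PQ,R}(y)$ in $R$; counting these gives exactly $h_{R\setminus I_0}^{I_0}$, while pairs with $x\in Q,\,y\in P$ contribute nothing. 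This yields the identity, hence the compatibility. I expect this bookkeeping of $\phi$ to be the main obstacle, precisely because it is where the exponent $h_{R\setminus I_0}^{I_0}$ of $\Delta_q$ must be produced on the nose.

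Finally, for nondegeneracy I would reindex the Gram matrix on $\PP(n)$ by composing the rows with $\iota$, setting $N_{P,Q}=\langle\iota(P),Q\rangle_q$; this is nonzero iff $P\leq Q$, so ordering $\PP(n)$ by any linear extension of $\leq$ makes $N$ upper triangular. Its diagonal entry is $\langle\iota(P),P\rangle_q=q^{\phi(\iota(P),P)}$, and since the increasing bijection $\iota(P)\to P$ is the identity while $\iota$ swaps $\leq_h$ and $\leq_r$, the two families defining $\phi(\iota(P),P)$ reduce to the $\leq_h$-comparable and the $\leq_r$-comparable pairs of $P$; by the incompatibility condition each of the $\binom{n}{2}$ unordered pairs is comparable in exactly one of the two orders, so $\phi(\iota(P),P)=\binom{n}{2}$ for every $P\in\PP(n)$. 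Hence $\det N=\pm\,q^{\binom{n}{2}\,|\PP(n)|}$, and since reindexing by the bijection $\iota$ only permutes rows, the Gram matrix has the same determinant up to sign. Thus each graded component, and therefore the whole pairing, is nondegenerate when $q\neq 0$, whereas for $n\geq 2$ the diagonal $q^{\binom{n}{2}}$ vanishes at $q=0$, forcing degeneracy there.
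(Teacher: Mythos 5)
Your proposal is correct and follows essentially the same route as the paper: the same $\theta_{P,Q}$-bijection argument for symmetry, the same reduction via lemma \ref{22} (point 3, with uniqueness of $I_0=\theta_{PQ,R}(Q)$) to the exponent identity $\phi(PQ,R)=\phi(P,R\setminus I_0)+\phi(Q,I_0)+h_{R\setminus I_0}^{I_0}$ established by the same block decomposition of $(PQ)^2$, and the same diagonal computation $\langle \iota(P),P\rangle_q=q^{n(n-1)/2}$ resting on the fact that $\theta_{P,\iota(P)}=Id_P$ and each pair of distinct vertices is comparable for exactly one of the two orders. Your only departure is cosmetic: you package the triangularity of the Gram matrix (rows reindexed by $\iota$, columns ordered by a linear extension of the Bruhat order) as a determinant computation $\det N=q^{\binom{n}{2}|\PP(n)|}$, whereas the paper runs the equivalent minimal-index argument on a hypothetical kernel vector and, for $q=0$, exhibits $\tdeux$ explicitly in the radical rather than invoking the vanishing determinant.
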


{\bf Examples.} 
$$\begin{array}{c|c}
&\tun\\
\hline \tun&1
\end{array}\hspace{1cm}\begin{array}{c|c|c}
&\tdeux&\tun\tun\\
\hline \tdeux&0&q\\
\hline \tun\tun&q&1
\end{array}\hspace{1cm}\begin{array}{c|c|c|c|c|c|c}
&\ttroisdeux&\ttroisun&\ptroisun&\tdeux\tun&\tun\tdeux&\tun\tun\tun\\
\hline\ttroisdeux&0&0&0&0&0&q^3\\
\hline\ttroisun&0&0&0&0&q^3&q^2\\
\hline\ptroisun&0&0&0&q^3&0&q^2\\
\hline \tdeux\tun&0&0&q^3&0&q^2&q\\
\hline \tun \tdeux&0&q^3&0&q^2&0&q\\
\hline\tun\tun\tun&q^3&q^2&q^2&q&q&1
\end{array}$$

\begin{proof} Let $P,Q \in \PP$. If $\iota(P)\leq Q$, then $\iota(Q)\leq \iota^2(P)=P$. Moreover, as $\theta_{P,Q}$ is bijective,
of inverse $\theta_{Q,P}$:
\begin{eqnarray*}
&&\sharp\{(x,y)\in P^2\mid x<_r y\mbox{ and }\theta_{P,Q}(x)<_h \theta_{P,Q}(y)\}\\
&=&\sharp\{(x',y')\in Q^2\mid \theta_{Q,P}(x')<_r \theta_{Q,P}(y')\mbox{ and }x'\leq_h y'\};\\ \\
&&\sharp\{(x,y)\in P^2\mid x<_h y\mbox{ and }\theta_{P,Q}(x)<_r \theta_{P,Q}(y)\}\\
&=&\sharp\{(x',y')\in Q^2\mid \theta_{Q,P}(x')<_h \theta_{Q,P}(y')\mbox{ and }x'\leq_r y'\}.
\end{eqnarray*}
So $\phi(P,Q)=\phi(Q,P)$, and this pairing is symmetric. \\

Let $P,Q,R \in \PP$. Let us prove that $\langle PQ,R\rangle_q=\langle P\otimes Q,\Delta_q(R)\rangle_q$. 

\emph{First case}. Let us assume that $\iota(PQ)\leq R$. By lemma \ref{22}, there exists a unique biideal $I_0$ of $R$, such that 
$\iota(P)\leq R\setminus I_0$ and $\iota(Q)\leq I_0$. Hence:
\begin{eqnarray*}
\langle P\otimes Q,\Delta_q(R)\rangle_q
&=&\sum_{\mbox{\scriptsize $I$ biideal of $R$}}q^{h_{R\setminus I}^I}\langle P,R\setminus I\rangle_q \langle Q,I\rangle_q\\
&=&q^{h_{R\setminus I_0}^{I_0}}\langle P,R\setminus I_0\rangle_q \langle Q,I_0\rangle_q+0\\
&=&q^{h_{R\setminus I_0}^{I_0}+\phi(P,R\setminus I_0)+\phi(Q,I_0)}.
\end{eqnarray*}
Moreover:
\begin{eqnarray*}
\Phi(PQ,R)&=&\sharp\{(x,y)\in P^2\mid x<_r y\mbox{ and } \phi_{PQ,R}(x) <_h \phi_{PQ,R}(y)\}\\
&&+\sharp\{(x,y)\in P^2\mid x<_h y\mbox{ and } \phi_{PQ,R}(x) <_r \phi_{PQ,R}(y)\}\\
&&+\sharp\{(x,y)\in Q^2\mid x<_r y\mbox{ and } \phi_{PQ,R}(x) <_h \phi_{PQ,R}(y)\}\\
&&+\sharp\{(x,y)\in Q^2\mid x<_h y\mbox{ and } \phi_{PQ,R}(x) <_r \phi_{PQ,R}(y)\}\\
&&+\sharp\{(x,y)\in P\times Q\mid x<_r y\mbox{ and } \phi_{PQ,R}(x) <_h \phi_{PQ,R}(y)\}\\
&&+\sharp\{(x,y)\in P\times Q\mid x<_h y\mbox{ and } \phi_{PQ,R}(x) <_r \phi_{PQ,R}(y)\}\\
&&+\sharp\{(x,y)\in Q\times P\mid x<_r y\mbox{ and } \phi_{PQ,R}(x) <_h \phi_{PQ,R}(y)\}\\
&&+\sharp\{(x,y)\in Q\times P\mid x<_h y\mbox{ and } \phi_{PQ,R}(x) <_r \phi_{PQ,R}(y)\}.
\end{eqnarray*}
As $I_0=\theta_{PQ,R}(Q)$, $\theta_{P,R\setminus I_0}$ is the restriction to $P$ of $\theta_{P\prodh Q,R}$ and $\theta_{Q,I_0}$ 
is the restriction to $Q$ of $\theta_{P\prodh Q,R}$, so:
\begin{eqnarray*}
\phi(P,R\setminus I_0)&=&\sharp\{(x,y)\in P^2\mid x<_r y\mbox{ and } \phi_{PQ,R}(x) <_h \phi_{PQ,R}(y)\}\\
&&+\sharp\{(x,y)\in P^2\mid x<_h y\mbox{ and } \phi_{P,R\setminus I_0}(x) <_r \phi_{P,R\setminus I_0}(y)\},\\ \\
\phi(Q,I_0)&=&\sharp\{(x,y)\in Q^2\mid x<_r y\mbox{ and } \phi_{PQ,R}(x) <_h \phi_{PQ,R}(y)\}\\
&&+\sharp\{(x,y)\in Q^2\mid x<_h y\mbox{ and } \phi_{P,R\setminus I_0}(x) <_r \phi_{P,R\setminus I_0}(y)\}.
\end{eqnarray*}
If $x\in P$ and $y\in Q$, then $x<_r y$. So:
\begin{eqnarray*}
0&=&\sharp\{(x,y)\in P\times Q\mid x<_h y\mbox{ and } \phi_{PQ,R}(x) <_r \phi_{PQ,R}(y)\}\\
&=&\sharp\{(x,y)\in Q\times P\mid x<_r y\mbox{ and } \phi_{PQ,R}(x) <_h \phi_{PQ,R}(y)\}\\
&=&\sharp\{(x,y)\in Q\times P\mid x<_h y\mbox{ and } \phi_{PQ,R}(x) <_r \phi_{PQ,R}(y)\},
\end{eqnarray*}
and:
\begin{eqnarray*}
h_{R\setminus I_0}^{I_0}&=&\sharp\{(x,y) \in P\times Q\mid  \phi_{PQ,R}(x) <_h \phi_{PQ,R}(y)\}\\
&=&\sharp\{(x,y) \in P\times Q\mid x<_r y\mbox{ and } \phi_{PQ,R}(x) <_h \phi_{PQ,R}(y)\}\\
\end{eqnarray*}
Finally, $\phi(PQ,R)=\phi(P,R\setminus I_0)+\phi(Q,I_0)+h_{R\setminus I_0}^{I_0}+0$. Hence:
$$\langle P\otimes Q,\Delta_q(R) \rangle_q=q^{\phi(PQ,R)}=\langle PQ,R\rangle_q.$$

\emph{Second case}. Let us assume that we do not have $\iota(PQ)\leq R$. By lemma \ref{22}, for any biideal $I$ of $R$, 
$\langle P,R\setminus I\rangle_q\langle Q,I\rangle_q=0$, So $\langle P\otimes Q,\Delta_q(R) \rangle_q=\langle PQ,R\rangle_q=0$. \\

Let us now study the degeneracy of this pairing. If $q=0$, the examples below show that $\tdeux$ is in the orthogonal of the pairing $\langle-,-\rangle_0$,
so this pairing is degenerate. Let us assume that $q\neq 0$. 

\emph{First step}. Let $P\in \PP(n)$. By definition of the pairing, $\langle \iota(P),P\rangle_q=q^{\phi(P,\iota(P))}$.
Moreover, as $\theta_{P,\iota(P)}=Id_P$:
\begin{eqnarray*}
\phi(P,\iota(P))&=&\sharp\{(x,y)\in P^2 \mid x<_r y\mbox{ and } x<_r y\}+\sharp\{(x,y)\in P^2 \mid x<_h y\mbox{ and } x<_h y\}\\
&=&\sharp\{(x,y)\in P^2 \mid x<_r y\}+\sharp\{(x,y)\in P^2 \mid x<_h y\}\\
&=&\sharp\{(x,y)\in P^2 \mid x< y\}\\
&=&\frac{n(n-1)}{2}.
\end{eqnarray*}
So $\langle P,\iota(P)\rangle_q=q^{\frac{n(n-1)}{2}}\neq 0$. \\
\emph{Second step}. 
Let us fix $n \geq 0$. We index the elements of $\PP(n)$ in such a way that if $\iota(P_i)<\iota(P_j)$ 
for the Bruhat order, then $i<j$. Let $x \in \PP(n)$, nonzero.  Let $i$ be the smallest integer such that $P_i$ appears in $x$.
Let $a$ be the coefficient of $P_i$ in $x$. If $j>i$, then it is not possible to have $\iota(P_j)\leq \iota(P_i)$, so $\langle P_j,\iota(P_i)\rangle_q=0$.
Consequently:
$$\langle x,\iota(P_i)\rangle_q=\langle a P_i,\iota(P_i)\rangle_q+0=aq^{\frac{n(n-1)}{2}}\neq 0.$$
So $x$ is not in the orthogonal of $\h_{\PP}$: the pairing is nondegenerate. \end{proof}\\

{\bf Remark.} This pairing is the pairing $\langle-,-\rangle_{q,0,1,0}$ of \cite{Foissy1}. 

\begin{prop}
We define a coproduct $\Delta'_q$ on $\h_\PP$ in the following way: for all $P\in \PP$,
$$\Delta'_q(P)=\sum_{P_1P_2=P}q^{|P_1||P_2|}P_1 \otimes P_2.$$
Then for all $x,y,z \in \h_\PP$, $\langle x\prodh y,z\rangle_q=\langle x\otimes y,\Delta'_q(z)\rangle_q$.
\end{prop}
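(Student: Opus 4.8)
The plan is to reduce to basis elements and to dualise the computation already performed for $\langle PQ,R\rangle_q$ in the proof of Theorem \ref{23}. By bilinearity it suffices to prove $\langle P\prodh Q,R\rangle_q=\langle P\otimes Q,\Delta'_q(R)\rangle_q$ for $P,Q,R\in\PP$. I would first expand the right-hand side as
$$\langle P\otimes Q,\Delta'_q(R)\rangle_q=\sum_{R_1R_2=R}q^{|R_1||R_2|}\langle P,R_1\rangle_q\langle Q,R_2\rangle_q,$$
so that a summand is nonzero exactly when $\iota(P)\leq R_1$ and $\iota(Q)\leq R_2$, that is, since $\iota$ is a decreasing involution, when $\iota(R_1)\leq P$ and $\iota(R_2)\leq Q$. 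A decomposition $R=R_1R_2$ is precisely the datum of a plane subposet $I_0=R_2$ with $R=(R\setminus I_0)I_0$, so the nonzero summands are indexed exactly by the objects described in Lemma \ref{22}(4).

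Next I would invoke Lemma \ref{22}(4): such a decomposition exists if and only if $\iota(P\prodh Q)\leq R$, and when it exists it is unique, with $R_2=I_0=\theta_{P\prodh Q,R}(Q)$ and $R_1=R\setminus I_0$. Hence both sides vanish unless $\iota(P\prodh Q)\leq R$; in that case the right-hand side collapses to the single term $q^{|R_1||R_2|+\phi(P,R_1)+\phi(Q,R_2)}$, while the left-hand side is $q^{\phi(P\prodh Q,R)}$. It then remains to verify the identity of exponents
$$\phi(P\prodh Q,R)=|R_1||R_2|+\phi(P,R_1)+\phi(Q,R_2).$$

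For this last step, which is the crux, I would split the pairs $(x,y)$ counted by $\phi(P\prodh Q,R)$ according to whether $x$ and $y$ lie in $P$ or in $Q$. Since the total order on $P\prodh Q$ places all of $P$ before all of $Q$, the bijection $\theta_{P\prodh Q,R}$ restricts to $\theta_{P,R_1}$ on $P$ and to $\theta_{Q,R_2}$ on $Q$; thus pairs with both coordinates in $P$ contribute $\phi(P,R_1)$ and those with both in $Q$ contribute $\phi(Q,R_2)$. For a pair with $x\in P$, $y\in Q$ one has $x<_h y$ in $P\prodh Q$ while $\theta(x)\in R_1$, $\theta(y)\in R_2$ force $\theta(x)<_r\theta(y)$ in $R=R_1R_2$ automatically; such pairs contribute exactly $|P||Q|=|R_1||R_2|$ to the second count and nothing to the first. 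Finally a pair with $x\in Q$, $y\in P$ satisfies $y<_h x$, hence meets neither clause and contributes nothing. Summing the four contributions yields the desired identity. The only real subtlety is to keep track of the incompatibility condition between $\leq_h$ and $\leq_r$, so as to confirm that each mixed pair lands in exactly one of the two counts; the rest of the argument runs parallel to, and is in fact slightly simpler than, the computation in Theorem \ref{23}.
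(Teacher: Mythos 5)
Your proposal is correct and takes essentially the same route as the paper's proof: reduction to basis elements, lemma \ref{22}(4) to collapse $\langle P\otimes Q,\Delta'_q(R)\rangle_q$ to the unique nonzero term indexed by $I_0=\theta_{P\prodh Q,R}(Q)$ (both sides vanishing when $\iota(P\prodh Q)\not\leq R$), and the exponent identity $\phi(P\prodh Q,R)=\phi(P,R\setminus I_0)+\phi(Q,I_0)+|R\setminus I_0|\,|I_0|$ obtained by splitting pairs according to membership in $P$ or $Q$, with the mixed pairs in $P\times Q$ contributing exactly $|P|\,|Q|=|R\setminus I_0|\,|I_0|$ to the second count and the pairs in $Q\times P$ contributing nothing. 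This matches the paper's argument step for step.
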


\begin{proof} Let $P,Q,R \in \PP$. Let us prove that $\langle P\prodh Q,R\rangle_q=\langle P\otimes Q,\Delta'_q(R)\rangle_q$. 

\emph{First case}. Let us assume that $\iota(P\prodh Q)\leq R$. By lemma \ref{22}, there exists a unique $I_0\subseteq R$, such that 
$R=(R\setminus I_0)I_0$, $\iota(P)\leq R\setminus I_0$ and $\iota(Q)\leq I_0$. Hence:
\begin{eqnarray*}
\langle P\otimes Q,\Delta'_q(R)\rangle_q
&=&\sum_{R=R_1R_2}q^{|R_1||R_2|}\langle P,R\setminus I\rangle_q \langle Q,I\rangle_q\\
&=&q^{|R\setminus I_0||I_0|}\langle P,R\setminus I_0\rangle_q \langle Q,I_0\rangle_q+0\\
&=&q^{|R\setminus I_0||I_0|+\phi(P,R\setminus I_0)+\phi(Q,I_0)}.
\end{eqnarray*}
Moreover:
\begin{eqnarray*}
\Phi(P\prodh Q,R)&=&\sharp\{(x,y)\in P^2\mid x<_r y\mbox{ and } \phi_{P\prodh Q,R}(x) <_h \phi_{P\prodh Q,R}(y)\}\\
&&+\sharp\{(x,y)\in P^2\mid x<_h y\mbox{ and } \phi_{P\prodh Q,R}(x) <_r \phi_{P\prodh Q,R}(y)\}\\
&&+\sharp\{(x,y)\in Q^2\mid x<_r y\mbox{ and } \phi_{P\prodh Q,R}(x) <_h \phi_{P\prodh Q,R}(y)\}\\
&&+\sharp\{(x,y)\in Q^2\mid x<_h y\mbox{ and } \phi_{P\prodh Q,R}(x) <_r \phi_{P\prodh Q,R}(y)\}\\
&&+\sharp\{(x,y)\in P\times Q\mid x<_r y\mbox{ and } \phi_{P\prodh Q,R}(x) <_h \phi_{P\prodh Q,R}(y)\}\\
&&+\sharp\{(x,y)\in P\times Q\mid x<_h y\mbox{ and } \phi_{P\prodh Q,R}(x) <_r \phi_{P\prodh Q,R}(y)\}\\
&&+\sharp\{(x,y)\in Q\times P\mid x<_r y\mbox{ and } \phi_{P\prodh Q,R}(x) <_h \phi_{P\prodh Q,R}(y)\}\\
&&+\sharp\{(x,y)\in Q\times P\mid x<_h y\mbox{ and } \phi_{P\prodh Q,R}(x) <_r \phi_{P\prodh Q,R}(y)\}.
\end{eqnarray*}

As $I_0=\theta_{P\prodh Q,R}(Q)$, $\theta_{P,R\setminus I_0}$ is the restriction to $P$ of $\theta_{P\prodh Q,R}$ and $\theta_{Q,I_0}$ 
is the restriction to $Q$ of $\theta_{P\prodh Q,R}$, so:
\begin{eqnarray*}
\phi(P,R\setminus I_0)&=&\sharp\{(x,y)\in P^2\mid x<_r y\mbox{ and } \phi_{P\prodh Q,R}(x) <_h \phi_{P\prodh Q,R}(y)\}\\
&&+\sharp\{(x,y)\in P^2\mid x<_h y\mbox{ and } \phi_{P,R\setminus I_0}(x) <_r \phi_{P,R\setminus I_0}(y)\},\\ \\
\phi(Q,I_0)&=&\sharp\{(x,y)\in Q^2\mid x<_r y\mbox{ and } \phi_{P\prodh Q,R}(x) <_h \phi_{P\prodh Q,R}(y)\}\\
&&+\sharp\{(x,y)\in Q^2\mid x<_h y\mbox{ and } \phi_{P,R\setminus I_0}(x) <_r \phi_{P,R\setminus I_0}(y)\}.
\end{eqnarray*}
If $x\in P$ and $y\in Q$, then $x<_h y$. So:
\begin{eqnarray*}
0&=&\sharp\{(x,y)\in P\times Q\mid x<_r y\mbox{ and } \phi_{P\prodh Q,R}(x) <_r \phi_{P\prodh Q,R}(y)\}\\
&=&\sharp\{(x,y)\in Q\times P\mid x<_r y\mbox{ and } \phi_{P\prodh Q,R}(x) <_h \phi_{P\prodh Q,R}(y)\}\\
&=&\sharp\{(x,y)\in Q\times P\mid x<_h y\mbox{ and } \phi_{P,R\setminus I_0}(x) <_r \phi_{P,R\setminus I_0}(y)\}.
\end{eqnarray*}
Moreover:
\begin{eqnarray*}
|R\setminus I_0||I_0|&=&\sharp\{(x',y') \in (R\setminus I_0)\times I_0\mid x'<_r y'\}\\
&=&\sharp\{(x,y) \in P\times Q\mid \phi_{P\prodh Q,R}(x)<_r \phi_{P\prodh Q,R}(y)\}\\
&=&\sharp\{(x,y)\in P\times Q\mid x<_h y\mbox{ and } \phi_{P,R\setminus I_0}(x) <_r \phi_{P,R\setminus I_0}(y)\}.
\end{eqnarray*}
Finally, $\phi(P\prodh Q,R)=\phi(P,R\setminus I_0)+\phi(Q,I_0)+|R\setminus I_0||I_0|+0$. Hence:
$$\langle P\otimes Q,\Delta'_q(R) \rangle_q=q^{\phi(P\prodh Q,R)}=\langle P\prodh Q,R\rangle_q.$$

\emph{Second case}. Let us assume that we don't have $\iota(P\prodh Q)\leq R$. By lemma \ref{22}, if $R=(R\setminus I)I$, then 
$\langle P,R\setminus I\rangle_q\langle Q,I\rangle_q=0$, So $\langle P\otimes Q,\Delta'_q(R) \rangle_q=\langle P\prodh Q,R\rangle_q=0$. \end{proof}\\

{\bf Remark.} The coproduct $\Delta'_q$ is the coproduct $\Delta_{(0,0,q,0)}$ of \cite{Foissy1}. \\

Let us conclude this section by the case $q=0$.

\begin{prop}\begin{enumerate}
\item For all plane poset $P$:
$$\Delta_0(P)=\sum_{P_1P_2=P}P_1\otimes P_2.$$
\item For any plane posets $P,Q$, $\langle P,Q \rangle_0\neq 0$ if, and only if, there exists $n \in \mathbb{N}$ such that $P=Q=\tun^n$.
Consequently, the kernel of the pairing $\langle-,-\rangle_0$ is the ideal generated by plane posets which are not equal to $\tun$.
\end{enumerate}\end{prop}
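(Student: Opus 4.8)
For the first part the plan is simply to specialize the formula for $\Delta_q$. Since for fixed $P$ the expression $\Delta_q(P)$ is a polynomial in $q$, setting $q=0$ keeps exactly the biideals $I$ of $P$ with $h_{P\setminus I}^I=0$ and kills all the others. I would then invoke the Remark following the definition of biideals, according to which $I\subseteq P$ is a biideal if and only if it is a final segment for the total order $\leq$; thus $I$ consists of the largest elements and $P\setminus I$ of the smallest. For such an $I$, any $x\in P\setminus I$ and $y\in I$ satisfy $x<y$, hence $x<_h y$ or $x<_r y$, and the condition $h_{P\setminus I}^I=0$ forbids the first alternative, i.e. forces $x<_r y$ for all such pairs; this is precisely the assertion $P=(P\setminus I)I$. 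As conversely every factorization $P=P_1P_2$ arises from the final segment $I=P_2$, the surviving terms are indexed exactly by the factorizations, which yields the claimed formula.

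For the second part I would first unwind the definition: $\langle P,Q\rangle_0\neq 0$ holds if and only if $\iota(P)\leq Q$ and $\phi(P,Q)=0$. Writing $\theta=\theta_{P,Q}$, the relation $\iota(P)\leq Q$ says exactly that $x<_h y$ in $P$ implies $\theta(x)<_r\theta(y)$ in $Q$; consequently every $\leq_h$-relation of $P$ is counted by the second sum defining $\phi(P,Q)$, so that sum equals $\sharp E(P)$. Imposing $\phi(P,Q)=0$ therefore forces $E(P)=\emptyset$, i.e. $P$ carries no $\leq_h$-relation and so $P=\tun^n$ with $n=|P|$. Once $P=\tun^n$, the order $<_r$ coincides with the total order, so the first sum defining $\phi(\tun^n,Q)$ counts all $\leq_h$-pairs of $Q$ while the second is empty; hence $\phi(\tun^n,Q)=\sharp E(Q)$, and $\phi=0$ forces $Q=\tun^n$ as well. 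The converse is immediate: for $P=Q=\tun^n$ the inequality $\iota(\tun^n)\leq\tun^n$ holds vacuously, since $\tun^n$ carries no $\leq_h$-relation, and $\phi(\tun^n,\tun^n)=\sharp E(\tun^n)=0$, so $\langle\tun^n,\tun^n\rangle_0=1\neq 0$.

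For the statement on the kernel I would observe that the two previous points make the Gram matrix of $\langle-,-\rangle_0$ in the basis $\PP$ diagonal, with entry $1$ at each pair $(\tun^n,\tun^n)$ and $0$ everywhere else. Hence $x=\sum_P a_P P$ is orthogonal to everything if and only if $a_{\tun^n}=0$ for all $n$, so the kernel is the linear span of the plane posets that are not powers of $\tun$. I would then check that this span is a two-sided ideal for $m$: if $R$ carries a $\leq_h$-relation (equivalently $R\neq\tun^m$), then $RS$ and $SR$ still carry it and so are not powers of $\tun$. Finally, using that every plane poset factors uniquely as a composition of $m$-indecomposable ones, this ideal is exactly the one generated by the indecomposable plane posets different from $\tun$, the quotient $\h_{\PP}/\ker$ being the polynomial algebra $K[\tun]$.

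I expect the only real difficulty to be the necessity half of the second part: the careful bookkeeping that, via $\theta$ and the relation $\iota(P)\leq Q$, identifies the two sums in $\phi(P,Q)$ with the $\leq_h$-relations of $P$ and of $Q$, keeping the roles of $\leq_h$ and $\leq_r$ straight. The reduction in the first part and the kernel computation are then essentially formal.
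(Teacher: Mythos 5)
Your proof is correct and, for the two enumerated points, follows the paper's own argument essentially verbatim. Part 1 is the same specialization of $\Delta_q$ at $q=0$: the surviving biideals $I$, those with $h_{P\setminus I}^I=0$, are forced to satisfy $x<_r y$ for all $x\in P\setminus I$, $y\in I$ (the paper argues directly from totality of $\leq$ where you invoke the Remark that biideals are final segments --- the same fact), giving exactly the factorizations $P=(P\setminus I)I$. Part 2 is the same counting inside $\phi(P,Q)$; the only cosmetic difference is that the paper first forces $Q=\tun^n$ via the first sum and concludes ``symmetrically'' for $P$, while you force $P=\tun^n$ via the second sum and then handle $Q$, correctly noting that once $P=\tun^n$ the first sum counts all of $E(Q)$.

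Where you go beyond the paper is the final ``Consequently'' about the kernel, which the paper states without any proof: your Gram-matrix observation (diagonal, with entry $1$ exactly at the pairs $(\tun^n,\tun^n)$), the identification of the radical with the span of the plane posets that are not powers of $\tun$, and the check that this span is a two-sided ideal for $m$ are all correct and fill a genuine gap. Moreover, your precise formulation --- the ideal generated by the $m$-indecomposable plane posets different from $\tun$, relying on unique factorization into indecomposables --- is the right reading of the statement: taken literally, ``the ideal generated by plane posets which are not equal to $\tun$'' would admit $\tun\tun$ as a generator, and that ideal contains $\tun^2$, which pairs nontrivially with itself and so does not lie in the kernel. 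Your care here repairs a small imprecision in the paper's phrasing rather than introducing one.
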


\begin{proof} 1. By definition of $\Delta_0$ :
$$\Delta_0(P)\sum_{\substack{\mbox{\scriptsize $I$ biideal of $P$},\\h_{P\setminus I}^I=0}}(P\setminus I) \otimes I.$$
Let $I$ be a biideal of $P$ such that $h_{P\setminus I}^I=0$. Let $x \in P\setminus I$ and $y\in I$. As $I$ is a biideal, $x> y$ is not possible,
so $x<y$. As $h_{P\setminus I}^I=0$, $x<_h y$ is not possible, so $x<_r y$: finally, $P=(P\setminus I)I$. 
Conversely, if $P=P_1P_2$, then $P_2$ is a biideal of $P$, and $h_{P\setminus P_2}^{P_2}=h_{P_1}^{P_2}=0$. \\

2. $\Longleftarrow$. We obviously have $\iota(\tun^n)\leq \tun^n$ and $\phi(\tun^n,\tun^n)=0$, so $\langle \tun^n,\tun^n\rangle_0=1$.\\

$\Longrightarrow$. Let us assume that $\langle P,Q \rangle_q\neq 0$. Then $P$ and $Q$ have the same degree, which we denote by $n$.
Moreover, $\iota(P)\leq Q$ and $\phi(P,Q)=0$. Let $x',y'\in Q$. We put $x'=\theta_{P,Q}(x)$ and $y'=\theta_{P,Q}(y)$.
If $x'<_h y'$ in $Q$, as $\iota(P)\leq Q$ necessarily $x<_r y \in P$, so:
$$\phi(P,Q)\geq \sharp\{(x,y) \in P^2\mid x<_r y \mbox{ and }\theta_{P,Q}(x)<_h \theta_{P,Q}(y)\}>0.$$
This is a contradiction. Hence, if $x',y'$ are two different vertices of $Q$, they are not comparable for $\leq_h$: $Q=\tun^n$.
Symmetrically, $P=\tun^n$. \end{proof}

\subsection{Level of a plane poset}

\begin{defi} 
Let $P$ be a plane poset. Its \emph{level} is the integer:
$$\ell(P)=\sharp\{(x,y) \in P^2\mid x<_r y\}.$$
\end{defi}

{\bf Examples.} Here are the level of plane posets of cardinality $\leq 3$:
$$\begin{array}{c|c|c|c|c|c|c|c|c|c}
P&\tun&\tdeux&\tun\tun&\ttroisdeux&\ttroisun&\ptroisun&\tdeux\tun&\tun\tdeux&\tun\tun\tun\\
\hline \ell(P)&0&0&1&0&1&1&2&2&3
\end{array}$$

\begin{prop}
Let $P$ and $Q$ be two plane posets. If $P\leq Q$, then any path in the Hasse (oriented) graph of the poset $(\PP,\leq)$ from $P$ to $Q$
has length $\ell(Q)-\ell(P)$.
\end{prop}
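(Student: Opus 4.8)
The plan is to exhibit $(\PP(n),\leq)$ as a graded poset whose rank is controlled by the size of $E(P)$, and then to convert this rank into the level $\ell$. Since in a graded poset all maximal chains between two comparable elements have the same length, the independence of the path and the precise value $\ell(Q)-\ell(P)$ will both drop out once the right rank identity is in hand.

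First I would identify the edges of the Hasse graph with the relation $\preceq$. By the remark following Lemma \ref{11}, $P\leq Q$ is equivalent to $E(Q)\subseteq E(P)$, and by definition $P\preceq Q$ means $E(Q)=E(P)-\{(i,j)\}$ for some $(i,j)\in E(P)$. I claim $\preceq$ is exactly the covering relation of $(\PP(n),\leq)$. Indeed, if $P\preceq Q$ then $\sharp(E(P)\setminus E(Q))=1$, so no $R$ can satisfy $E(Q)\subsetneq E(R)\subsetneq E(P)$, hence nothing lies strictly between $P$ and $Q$. Conversely, if $P<Q$ is a cover, Lemma \ref{13} yields a chain $P=P_0\preceq\cdots\preceq P_k=Q$ with $k\geq 1$; if $k\geq 2$ the element $P_1$ would lie strictly between $P$ and $Q$, contradicting the covering hypothesis, so $k=1$ and $P\preceq Q$. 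Therefore a path in the oriented Hasse graph from $P$ to $Q$ is precisely a chain $P=P_0\preceq P_1\preceq\cdots\preceq P_k=Q$, and since each step removes exactly one element of $E$, its length is $k=\sharp E(P)-\sharp E(Q)$, a quantity that is already manifestly independent of the chosen path.

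The key computation is then the identity $\sharp E(P)+\ell(P)=\binom{n}{2}$ for every $P\in\PP(n)$. This is where I would invoke the defining incompatibility condition together with the proposition asserting that $\leq$ is a total order on $P$: for any two distinct vertices, after ordering them as $x<y$ for the total order, exactly one of $x<_h y$ or $x<_r y$ holds, because comparability for $\leq_h$ is equivalent to incomparability for $\leq_r$. Noting that $i<_h j$ forces $i<j$ for the total order (so $\sharp E(P)$ counts exactly the $h$-comparable unordered pairs) and that $\ell(P)$ counts the $r$-comparable ones, the $\binom{n}{2}$ unordered pairs partition into these two classes, which gives the identity.

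Combining the two steps, any path from $P$ to $Q$ has length
$$\sharp E(P)-\sharp E(Q)=\left(\binom{n}{2}-\ell(P)\right)-\left(\binom{n}{2}-\ell(Q)\right)=\ell(Q)-\ell(P),$$
as claimed. The only genuinely delicate point is the first step, matching the abstract covering edges of the Hasse graph with the combinatorial relation $\preceq$; but this is essentially already packaged in Lemma \ref{13}, after which the counting identity is routine.
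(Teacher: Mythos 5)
Your proposal is correct and is essentially the paper's own argument: your rank identity $\sharp E(P)+\ell(P)=\tfrac{n(n-1)}{2}$ is the paper's equation (\ref{E1}), and your identification of Hasse-graph edges with the relation $\preceq$ via lemma \ref{13} is exactly the paper's second step. The only cosmetic difference is bookkeeping: you prove both directions of the cover characterization (slightly more than needed) and compute the path length once as $\sharp E(P)-\sharp E(Q)$, while the paper only shows each edge satisfies $\ell(P_{i+1})=\ell(P_i)+1$ and sums along the path.
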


\begin{proof}  {\it First step.} Let $R$ be  a plane poset of cardinality $n$. Then:
\begin{equation} \label{E1}
\ell(R)=\sharp\{(x,y) \in R^2\mid x<y\}-\sharp\{(x,y) \in R^2\mid x<_h y\}=\frac{n(n-1)}{2}-\sharp E(R).
\end{equation}
Consequently, if $R<S$ in $\PP(n)$, then $E(S) \subsetneq E(R)$, so $\ell(R)<\ell(S)$.\\

{\it Second step.} Let $R$ and $S$ be two plane posets of the same cardinality $n$, such that there is an edge from $R$ to $S$ in the Hasse graph of $(\PP(n), \leq)$.
Then $R < S$, so $E(S)\subsetneq E(R)$. Let us put $k=\sharp E(R)-\sharp E(S)$. Note that $k\geq 1$. By the first step, $\ell(S)-\ell(R)=k$.
If $k\geq 2$, by lemma \ref{13}, there exists $P_1,\ldots,P_{k-1}\in \PP(n)$, such that $R\preceq P_1\preceq \ldots \preceq P_{k-1} \preceq S$.
Consequently, $R<P_1<\ldots<P_{k_1}<S$, so there is no edge from $R$ to $S$ in the Hasse graph: contradiction. So $k=1$. \\

{\it Conclusion}. Let $P=P_0<P_1<\ldots<P_{k-1}<P_k=Q$ be a path from $P$ to $Q$ in the Hasse graph. For all $0\leq i \leq k-1$,
there is an edge from $P_i$ to $P_{i+1}$, so $\ell(P_{i+1})=\ell(P_i)+1$ from the second step. Finally, $\ell(S)=\ell(R)+k$. \end{proof}

\begin{prop}
Let $P,Q \in \PP(n)$, such that $\iota(P)\leq Q$. Then:
$$\langle P,Q \rangle_q=q^{n(n-1)-\ell(P)-\ell(Q)}.$$
\end{prop}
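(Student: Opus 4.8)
The plan is to reduce everything to a counting identity for the exponent $\phi(P,Q)$. Since $\iota(P)\leq Q$ by hypothesis, the definition of the pairing in Theorem \ref{23} gives $\langle P,Q\rangle_q=q^{\phi(P,Q)}$, so it suffices to prove that $\phi(P,Q)=n(n-1)-\ell(P)-\ell(Q)$. First I would record that, because $P$ and $Q$ are both identified with $\{1,\ldots,n\}$ as totally ordered sets, the increasing bijection $\theta_{P,Q}$ is simply $Id$; thus in the definition of $\phi(P,Q)$ every occurrence of $\theta_{P,Q}$ disappears, and we are comparing, for each pair $x<y$, the \emph{type} of the relation between $x$ and $y$ in $P$ against its type in $Q$. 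By the incompatibility condition, each ordered pair $(x,y)$ with $x<y$ is of exactly one type, namely $h$ (if $x<_h y$) or $r$ (if $x<_r y$), separately in $P$ and in $Q$.

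Next I would set up the obvious $2\times 2$ classification of the $\frac{n(n-1)}{2}$ pairs $x<y$ according to their type in $P$ and their type in $Q$: let $a,b,c,d$ count the pairs of type $(h,h)$, $(h,r)$, $(r,h)$, $(r,r)$ respectively. Reading off the definition of $\phi$ with $\theta_{P,Q}=Id$, the first sum counts the $(r,h)$ pairs and the second the $(h,r)$ pairs, so $\phi(P,Q)=b+c$. The decisive step is to exploit the hypothesis $\iota(P)\leq Q$. Since $E(\iota(P))=\{(x,y)\mid x<_r y\text{ in }P\}$, the criterion $E(Q)\subseteq E(\iota(P))$ says that every $h$-pair of $Q$ is an $r$-pair of $P$; by the incompatibility condition no pair can then be simultaneously of type $h$ in $P$ and of type $h$ in $Q$, that is, $a=0$.

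With $a=0$ the marginals of the table collapse: the $h$-pairs of $P$ are counted by $a+b=b$, so $b=\sharp E(P)$, and the $h$-pairs of $Q$ by $a+c=c$, so $c=\sharp E(Q)$. Finally I would invoke identity \eqref{E1}, in the form $\sharp E(R)=\frac{n(n-1)}{2}-\ell(R)$, to get $b=\frac{n(n-1)}{2}-\ell(P)$ and $c=\frac{n(n-1)}{2}-\ell(Q)$, whence $\phi(P,Q)=b+c=n(n-1)-\ell(P)-\ell(Q)$, as claimed. The only genuinely delicate point is the vanishing $a=0$: everything else is bookkeeping, but that step is exactly where both the hypothesis $\iota(P)\leq Q$ and the defining incompatibility of plane posets enter, and getting its direction right (it is the $h$-pairs of $Q$, not of $P$, that are constrained to be $r$-pairs of $P$) is where care is needed.
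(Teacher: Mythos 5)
Your proof is correct and is essentially the paper's own argument in different packaging: the paper evaluates the two sums defining $\phi(P,Q)$ separately, using $\iota(P)\leq Q$ for the first and the antitone property of $\iota$ (giving $\iota(Q)\leq P$) for the second, and reduces them to $\sharp E(Q)$ and $\sharp E(P)$ via equation~(\ref{E1}) --- exactly your marginals $c$ and $b$ once $a=0$. Your $2\times 2$ table, where the single vanishing $a=0$ yields both implications through the incompatibility dichotomy (thereby bypassing the explicit appeal to $\iota$ being order-reversing), is a tidy repackaging, but the underlying counting is identical.
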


\begin{proof}
As $\iota(P) \leq Q$, for all $x,y\in P$, $\theta_{P,Q}(x)<_h \theta_{P,Q}(y)$ in $Q$ implies that $x<_r y$ in $P$.
So, with the help of (\ref{E1}):
\begin{eqnarray*}
\sharp\{(x,y)\in P^2\mid x<_r y \mbox{ and }\theta_{P,Q}(x)<_h \theta_{P,Q}(y)\}
&=&\sharp\{(x,y)\in P^2\mid \theta_{P,Q}(x)<_h \theta_{P,Q}(y)\}\\
&=&\sharp\{(x',y') \in Q^2\mid x'<_h y'\}\\
&=&\frac{n(n-1)}{2}-\ell(Q).
\end{eqnarray*}
Moreover, $\iota(Q)\leq \iota^2(P)=P$, so, for all $x,y\in P$, $x\leq_h y$ in $P$ implies that $\theta_{P,Q}(x)<_r \theta_{P,Q}(y)$ in $Q$.
So, with the help of (\ref{E1}):
\begin{eqnarray*}
\sharp\{(x,y)\in P^2\mid x<_h y \mbox{ and }\theta_{P,Q}(x)<_r \theta_{P,Q}(y)\}
&=&\sharp\{(x,y) \in P^2\mid x<_h y\}\\
&=&\frac{n(n-1)}{2}-\ell(P).
\end{eqnarray*}
Summing, we obtain $\phi(P,Q)=n(n-1)-\ell(P)-\ell(Q)$. \end{proof}

\bibliographystyle{amsplain}
\bibliography{biblio}

\end{document}